\numberwithin{equation}{section}
\theoremstyle{plain}
\newtheorem{thm}{Theorem}[section]
\newtheorem{lem}[thm]{Lemma}
\newtheorem{cor}[thm]{Corollary}
\theoremstyle{definition}
\newtheorem{rem}{Remark}
\newcommand\R{{\mathbb R}}
\begin{document}

\begin{frontmatter}



\title{Muliti-scale regularity  of axisymmetric Navier-Stokes equations}
\author{Daoyuan Fang\fnref{a}}
\ead{dyf@zju.edu.cn}
\author{Hui Chen\corref{*}\fnref{b}}
\ead{chenhui@zust.edu.cn}
\author{Ting Zhang\fnref{a}}
\ead{zhangting79@zju.edu.cn}
\address[a]{School of Mathematical Sciences, Zhejiang University,  Hangzhou 310027, P.R. China}
\address[b]{School of Science, Zhejiang University of Science and Technology, Hangzhou 310023, P.R. China }
\cortext[*]{Corresponding author.}
\begin{abstract}
By applying the delicate \textit{a priori} estimates for the equations of  $(\Phi,\Gamma)$, which is introduced in the previous work, we obtain some multi-scale regularity criteria of the swirl component $u^{\theta}$ for the 3D axisymmetric Navier-Stokes equations. In particularly, the solution $\mathbf{u}$ can be continued beyond the time $T$, provided that $u^{\theta}$ satiesfies
$$
u^{\theta} \in L^{p}_{T}L^{q_{v}}_{v}L^{q_{h},w}_{h},~~\frac{2}{p}+\frac{1}{q_{v}}+\frac{2}{q_{h}}\leq 1, ~2<q_{h}\leq\infty,~\frac{1}{q_{v}}+\frac{2}{q_{h}}<1.
$$
\end{abstract}

\begin{keyword}
axisymmetric Navier-Stokes equations \sep regularity critria \sep anisotropic.

\MSC 35K15 \sep 35K55 \sep 35Q35 \sep 76D03

\end{keyword}

\end{frontmatter}


\section{Introduction}
This article aims at presenting some new regularity criteria of the swirl component $u^{\theta}$, in the framework of anisotropic Lebesgue space, which improve in that of \cite{Chen2017b}.

Consider the Cauchy problem of the 3D Navier-Stokes equations:
\begin{equation}
\left\{
\begin{split}\label{NS}
&\partial_{t}\mathbf{u}+(\mathbf{u}\cdot\nabla)\mathbf{u}-\Delta \mathbf{u}+\nabla p=\mathbf{0},\\
&\nabla\cdot \mathbf{u}=0~,\\
&\mathbf{u}|_{t=0}=\mathbf{u_{0}}~.
\end{split}
\right.\hspace{10pt}(t,x)\in\R^+\times\R^{3}~,
\end{equation}
where $\mathbf{u}(t,x)=(u^{1},u^{2},u^{3})$, $p(t,x)$ and $\mathbf{u_{0}}$ denote the fluid velocity field,  the pressure, and  the given initial velocity field, respectively.

For given $\mathbf{u_{0}}\in L^{2}(\R^{3})$ with $\mathrm{div}~\mathbf{u_{0}}=0$ in the sense of distribution, a global weak solution $\mathbf{u}$ to the Navier-Stokes equations was
constructed by Leray \cite{Leray} and Hopf \cite{Hopf}, which is called Leray-Hopf weak solution. The regularity of such Leray-Hopf weak solution in three dimension plays an important role in the mathematical fluid mechanics. One essential work is usually referred as Prodi-Serrin (P-S) conditions (see \cite{Escauriaza,Fabes,Giga,Prodi,Serrin,Struwe,Takahashi}), i.e. if in addition, the weak solution $\mathbf{u}$  belongs to $L^{p}((0,T);L^{q}(\R^{3}))$, where $\frac{2}{p}+\frac{3}{q}\leq 1$, $3\leq q\leq\infty$,
then the weak solution becomes regular.

In this paper, we assume that the solution $\mathbf{u}$ of the system (\ref{NS}) has the axisymmetric form
\begin{equation}\label{u}
\mathbf{u}(t,x)=u^{r}(t,r,x_{3})\mathbf{e_{r}}+u^{\theta}(t,r,x_{3})\boldsymbol{e_{\theta}}+u^{3}(t,r,x_{3})\mathbf{e_{3}},
\end{equation}
where
\begin{equation*}
\mathbf{e}_{r}=(\frac{x_{1}}{r},\frac{x_{2}}{r},0),~\mathbf{e}_{\theta}=(-\frac{x_{2}}{r},\frac{x_{1}}{r},0),~\mathbf{e}_{3}=(0,0,1),
\ r=\sqrt{x_{1}^{2}+x_{2}^{2}}.
\end{equation*}
In above, $u^{\theta}$ is usually called the swirl component. And if $u^{\theta}=0$, the solution $\mathbf{u}$ is without swirl.

For the axisymmetric solutions of Navier-Stokes system, we can equivalently reformulate (\ref{NS}) as
\begin{equation}\label{NS-u}
\left\{
\begin{split}
&\partial_{t}u^{r}+(u^{r}\partial_{r}+u^{3}\partial_{3})u^{r}-(\partial_{r}^{2}+\partial_{3}^{2}+\frac{1}{r}\partial_{r}-\frac{1}{r^{2}})u^{r}-\frac{(u^{\theta})^{2}}{r}+\partial_{r}p=0,\\
&\partial_{t}u^{\theta}+(u^{r}\partial_{r}+u^{3}\partial_{3})u^{\theta}-(\partial_{r}^{2}+\partial_{3}^{2}+\frac{1}{r}\partial_{r}-\frac{1}{r^{2}})u^{\theta}+\frac{u^{\theta}u^{r}}{r}=0,\\
&\partial_{t}u^{3}+(u^{r}\partial_{r}+u^{3}\partial_{3})u^{3}-(\partial_{r}^{2}+\partial_{3}^{2}+\frac{1}{r}\partial_{r})u^{3}+\partial_{3}p=0,\\
&\partial_{r}u^{r}+\frac{1}{r}u^{r}+\partial_{3}u^{3}=0,\\
&(u^{r},u^{\theta},u^{3})|_{t=0}=(u_{0}^{r},u_{0}^{\theta},u_{0}^{3}).
\end{split}
\right.
\end{equation}
For the axisymmetric velocity field $\mathbf{u}$, we can also compute the vorticity $\boldsymbol{\omega}=\mathrm{curl}~\mathbf{u}$ as follows,
\begin{equation}\label{omega}
\boldsymbol{\omega}=\omega^{r}\mathbf{e}_{r}+\omega^{\theta}\mathbf{e}_{\theta}+\omega^{3}\mathbf{e}_{3},
\end{equation}
with $
\omega^{r}=-\partial_{3}u^{\theta},~\omega^{\theta}=\partial_{3}u^{r}-\partial_{r}u_{3},~\omega^{3}=\partial_{r}u^{\theta}+\frac{u^{\theta}}{r}$. Furthermore, $(\omega^{r},\omega^{\theta},\omega^{3})$ satisfy
\begin{equation}\label{NS-omega}
\left\{
\begin{split}
&\partial_{t}\omega^{r}+(u^{r}\partial_{r}+u^{3}\partial_{3})\omega^{r}-(\partial_{r}^{2}+\partial_{3}^{2}+\frac{1}{r}\partial_{r}-\frac{1}{r^{2}})\omega^{r}-(\omega^{r}\partial_{r}+\omega^{3}\partial_{3})u^{r}=0,\\
&\partial_{t}\omega^{\theta}+(u^{r}\partial_{r}+u^{3}\partial_{3})\omega^{\theta}-(\partial_{r}^{2}+\partial_{3}^{2}+\frac{1}{r}\partial_{r}-\frac{1}{r^{2}})\omega^{\theta}-\frac{2u^{\theta}\partial_{3}u^{\theta}}{r}-\frac{u^{r}\omega^{\theta}}{r}=0,\\
&\partial_{t}\omega^{3}+(u^{r}\partial_{r}+u^{3}\partial_{3})\omega^{3}-(\partial_{r}^{2}+\partial_{3}^{2}+\frac{1}{r}\partial_{r})\omega^{3}-(\omega^{r}\partial_{r}+\omega^{3}\partial_{3})u^{3}=0,\\
&(\omega^{r},\omega^{\theta},\omega^{3})|_{t=0}=(\omega_{0}^{r},\omega_{0}^{\theta},\omega_{0}^{3}).
\end{split}
\right.
\end{equation}
The bounded property of $ru^{\theta}$ preserves as the time grows, i.e. $ru^{\theta} \in L^{\infty}([0,+\infty);L^{\infty}(\R^{3}))$, if $ru_{0}^{\theta}\in L^{\infty}(\R^{3})$, see \cite{Pokorny1, JG.Liu1} etc. It is an essential ingredient for the axisymmetric Navier-Stokes equations.  And it makes us to consider the regularity criteria of $u^{\theta}$ in the critical case for the  axisymmetric Navier-Stokes equations.

We recall that global well-posedness result was firstly proved under no swirl assumption, i.e. $u^{\theta}=0$, independently by Ukhovskii and Yudovich \cite{Ukhovskii}, and Ladyzhenskaya \cite{Ladyzhenskaya}, also \cite{Leonardi} for a refined proof. When the angular velocity $u^{\theta}$ is not trivial, the global well-posedness problem is still open. Much attentions has been draw for decades and tremendous efforts and interesting progress have been made on the regularity problem of the axisymmetric Navier-Stokes equations\cite{Chae,Chen1,Chen2,Chen2017a,Chen2017b,Koch,
Pokorny3,Z.Lei,P.Zhang} etc. .
In \cite{Chen1,Chen2}, Chen, Strain, Tsai and Yau  proved that the suitable weak solutions are smooth if the velocity field $\mathbf{u}$ satisfies $r|\mathbf{u}|\leq C<\infty$. Applying the Liouville type theorem for the ancient solutions of Navier-Stokes equations, Z. Lei and Qi S. Zhang \cite{Z.Lei} obtained the similar result in the case $\mathbf{b}=u^{r}(t,r,x_{3})\mathbf{e_{r}}+u^{3}(t,r,x_{3})\mathbf{e_{3}}\in L^\infty((0,T);BMO^{-1})$. And we promote in \cite{Chen2017b} that the solution $\mathbf{u}$ is smooth in $(0,T]\times \R^{3}$, if $r^{d} u^{\theta} \in L^{p}((0,T);L^{q}(\R^{3}))$, where
$$
\frac{2}{p}+\frac{3}{q}\leq1-d,~0\leq d<1,~\frac{3}{1-d} < q\leq \infty, ~\frac{2}{1-d} \leq p\leq\infty.
$$
The above regularity criteria of $u^\theta$, which is  scaling invariant, greatly develop the corresponding regularity criteria in \cite{Pokorny2,Pokorny3,Pokorny1,P.Zhang}. Unfortunately, it fails in the critical case $d=1$, which is the ideal goal, since the conservation law of $ru^{\theta}$. Since then, there are some significant improvements and applications (\cite{Lei2017,Wei2016,Chen2017a,Le2017}).

In this paper, we introduce an anisotropic Lebesgue space $L^{p}_{T}L^{q_{v}}_{v}L^{q_{h},w}_{h}$, since the solutions behavior anisotropic on the variable $r$ and $x_{3}$. By applying the delicate \textit{a priori} estimations for the equations of $(\Phi,\Gamma)$, we can obtain the regularity criteria
$$
u^{\theta} \in L^{p}_{T}L^{q_{v}}_{v}L^{q_{h},w}_{h},~~\frac{2}{p}+\frac{1}{q_{v}}+\frac{2}{q_{h}}\leq 1, ~2<q_{h}\leq\infty,~\frac{1}{q_{v}}+\frac{2}{q_{h}}<1.
$$
It improves the regularity criteria in \cite{Chen2017b}. Moreover it provides  us a new perspective to the open problem, instead of the weighted Lebesgue space in \cite{Chen2017b}. For instace, we assume $ru^{\theta}$ is H\"{o}lder for the variable $r$, i.e. $|u^{\theta}|\leq Cr^{\alpha-1}, ~0< \alpha\leq 1.$ Therefore, the solution $\mathbf{u}$ is regular, since $u^{\theta}\in L_{T}^{\infty}L^{\infty}_{v}L^{\frac{2}{1-\alpha},w}_{h}$. The authors in \cite{Chen1, Chen2, Chen2017b,Z.Lei} drew a similar argument. And there are some detail discussions in Remark \ref{rem1} for the extreme points. \\

\textbf{Notations.} Throughout this paper,  $L^{q,r}(\R^{n})$ stands for Lorentz space, while $L^{q,w}=L^{q,\infty}$.\\
Moreover, we introduce the Banach space $L^{p}_{T}L^{q_{v}}_{v}L^{q_{h},w}_{h}$, equipped with norm
$$
\|f\|_{L^{p}_{T}L^{q_{v}}_{v}L^{q_{h},w}_{h}}=\|\|\|f(t,x_{1},x_{2},x_{3})\|_{L^{q_{h},w}(\R^{2},dx_{1}dx_{2})}\|_{L^{q_{v}}(\R,dx_{3})}\|_{L^{p}((0,T),dt)}.
$$
And we denote $\dot{H}^{s,p}$ and $\dot{B}_{p,q}^{s}$ for the homogeneous Soblev space and homogeneous Besov space, respectively. For simplicity, we denote $\dot{H}_{h}^{s}=\dot{H}^{s}(\R^{2},dx_{1}dx_{2}), L_{x}^{p}=L^{p}(\R^{3},dx)$. And the other ones are similar.

We note $\mathbf{b}=u^{r}(t,r,x_{3})\mathbf{e}_{r}+u^{3}(t,r,x_{3})\mathbf{e}_{3}$, and $(\Phi,\Gamma)=(\frac{\omega^{r}}{r},\frac{\omega^{\theta}}{r})$, while $\omega^{r},\omega^{\theta}$ is defined in (\ref{omega}).

Finally, we note C the arbitrary constant.

\section{Main Result}
\begin{thm}\label{thm2.1}
Let $\mathbf{u}\in C([0,T);H^2(\R^3))\cap L^2_{loc}([0,T);H^3(\R^3))$ be the unique axisymmetric solution of the Navier-Stokes equations with the axisymmetric initial data $\mathbf{u_{0}}\in H^{2}(\R^{3})$ and $\mathrm{div}~\mathbf{u_{0}}=0$. If $ru_{0}^{\theta}\in L^{\infty}$ and the time $T<\infty$,  the solution $\mathbf{u}$ can be continued beyond the time $T$, provided that the swirl $u^{\theta}$ satiesfies
$$
r^{d}u^{\theta}\in L^{p}_{T}L^{q_{v}}_{v}L^{q_{h},w}_{h},~~
\frac{2}{p}+\frac{1}{q_{v}}+\frac{2}{q_{h}}\leq 1-d, ~-1\leq d<1,\frac{2}{1-d}<q_{h}\leq\infty,~\frac{1}{q_{v}}+\frac{2}{q_{h}}<1-d.
$$
\end{thm}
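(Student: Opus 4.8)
The plan is to argue via the standard continuation principle. Since $\mathbf{u}\in C([0,T);H^2)\cap L^2_{loc}([0,T);H^3)$ is the unique smooth solution, it suffices to produce an a priori bound, uniform on $[0,T)$, for a critical quantity that controls the $H^2$-norm of $\mathbf{u}$ and thereby excludes any singularity at $t=T$. Because $ru_0^\theta\in L^\infty$, the maximum principle applied to the second equation in (\ref{NS-u}) gives $\|ru^\theta\|_{L^\infty_tL^\infty_x}\le\|ru_0^\theta\|_{L^\infty}=:M$, which will be used repeatedly, in particular to trade one power of $u^\theta$ against a power of $r^{-1}$ in the stretching terms. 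The core of the argument is the coupled parabolic system satisfied by $(\Phi,\Gamma)=(\omega^r/r,\omega^\theta/r)$; differentiating (\ref{omega})--(\ref{NS-omega}) as in the previous work one obtains transport--diffusion equations of the form
\[
\partial_t\Gamma+(\mathbf{b}\cdot\nabla)\Gamma-\Big(\Delta+\tfrac{2}{r}\partial_r\Big)\Gamma=-\frac{2u^\theta}{r}\,\Phi ,
\]
together with the companion equation for $\Phi$ carrying the vortex-stretching terms in $\nabla\mathbf{b}$, whose delicate a priori estimates are furnished by the previous work and which I shall invoke without repetition.

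Next I would run the $L^2(\R^3)$ energy estimate, testing the $(\Phi,\Gamma)$ system against $(\Phi,\Gamma)$. The convection terms vanish after integration by parts thanks to $\mathrm{div}\,\mathbf{b}=\partial_ru^r+\frac1ru^r+\partial_3u^3=0$, and the dissipation produces the coercive quantities $\|\nabla\Phi\|_{L^2}^2+\|\nabla\Gamma\|_{L^2}^2$ up to favourable axis terms of definite sign. The vortex-stretching contribution from the $\Phi$-equation is absorbed into the dissipation and the energy exactly as in the previous work, using the Biot--Savart representation of $\mathbf{b}$ through $\Gamma$ and the bound $M$. This reduces the whole estimate to a single genuinely new term,
\[
I:=\int_{\R^3}\frac{u^\theta}{r}\,\Phi\,\Gamma\,dx=\int_{\R^3}(r^{d}u^\theta)\,r^{-1-d}\,\Phi\,\Gamma\,dx ,
\]
and the entire theorem hinges on estimating $I$ by the anisotropic hypothesis on $r^{d}u^\theta$.

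To bound $I$ I would first apply H\"older's inequality in the mixed norm, placing $r^{d}u^\theta$ in $L^{q_v}_vL^{q_h,w}_h$ and the remaining factor $r^{-1-d}\Phi\Gamma$ in the conjugate space $L^{q_v'}_vL^{q_h',1}_h$, the horizontal pairing being legitimised by O'Neil's H\"older inequality for Lorentz spaces. The condition $q_h>\frac{2}{1-d}$ makes the horizontal exponent admissible for the two-dimensional Sobolev embedding, in its Lorentz-refined form, with scaling $s=1-\frac{2}{\rho}$, while the singular axis weight $r^{-1-d}$ is absorbed by the horizontal Hardy inequality (again using $q_h>\frac{2}{1-d}$ so that the weight stays within the Hardy range), transferring one factor of the weight onto horizontal derivatives of $\Phi,\Gamma$. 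Interpolating in the vertical variable between $L^2_v$ and $\dot H^1_v$ and in the horizontal variable between $L^2_h$ and $\dot H^1_h$, I would estimate $r^{-1-d}\Phi\Gamma$ by a power of $\|(\Phi,\Gamma)\|_{L^2}$ and a matching power of the dissipation $\|\nabla(\Phi,\Gamma)\|_{L^2}$, the exponents being dictated precisely by the scaling identity $\frac2p+\frac1{q_v}+\frac2{q_h}=1-d$ (the subcritical case $\le$ being easier). The strict inequality $\frac1{q_v}+\frac2{q_h}<1-d$ guarantees that the power of the dissipation is strictly below $2$, so that Young's inequality absorbs half of it into the left-hand side, leaving a term $C\,\|r^{d}u^\theta(t)\|_{L^{q_v}_vL^{q_h,w}_h}^{p}\big(\|\Phi\|_{L^2}^2+\|\Gamma\|_{L^2}^2\big)$.

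A Gronwall argument then closes the estimate: since $t\mapsto\|r^{d}u^\theta(t)\|_{L^{q_v}_vL^{q_h,w}_h}^{p}$ is integrable on $(0,T)$ by hypothesis, $\|\Phi(t)\|_{L^2}^2+\|\Gamma(t)\|_{L^2}^2$ stays bounded up to $T$, which controls the higher norms of $\mathbf{u}$ and permits continuation past $T$. The main obstacle I anticipate is precisely the estimate of $I$: reconciling three difficulties simultaneously --- the weak (Lorentz) horizontal integrability, which forbids ordinary H\"older and forces the O'Neil inequality together with a Lorentz-refined two-dimensional Sobolev embedding; the anisotropy, which requires treating the horizontal and vertical directions by separate interpolations; and the singular weight $r^{-1-d}$ near the symmetry axis, which must be tamed by a horizontal Hardy inequality compatible with the range $q_h>\frac{2}{1-d}$. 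Making all three cooperate while still matching the scaling budget, and in particular closing the critical equality case by Gronwall rather than by any smallness assumption, is where the delicacy of the argument lies.
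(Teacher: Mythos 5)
There is a genuine gap, and it sits exactly where you delegate work away from the hypothesis. You claim that the vortex-stretching term in the $\Phi$-equation, $(\omega^{r}\partial_{r}+\omega^{3}\partial_{3})\frac{u^{r}}{r}$, can be ``absorbed into the dissipation and the energy exactly as in the previous work, using the Biot--Savart representation of $\mathbf{b}$ through $\Gamma$ and the bound $M$,'' so that the only term needing the anisotropic hypothesis is $I=\int\frac{u^{\theta}}{r}\Phi\Gamma\,dx$. This is false. With only $|u^{\theta}|\leq M/r$ at your disposal, the stretching term (after the antisymmetrization used in the paper, which turns it into $\int u^{\theta}(\partial_{r}\frac{u^{r}}{r}\partial_{3}\Phi-\partial_{3}\frac{u^{r}}{r}\partial_{r}\Phi)\,dx$ and then, by Cauchy--Schwarz and $|u^{\theta}|^{2}\leq M\frac{|u^{\theta}|}{r}$, into $C\int\frac{|u^{\theta}|}{r}|\nabla\frac{u^{r}}{r}|^{2}\,dx$) can only be bounded by $CM\|\nabla\Gamma\|_{L^{2}}^{2}$-type quantities, i.e.\ by a constant multiple of the dissipation with a constant proportional to $M$; absorbing it would require $M$ small, which is precisely the (open, critical) small-swirl situation and is not assumed here. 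In the previous work \cite{Chen2017b} this term is estimated using the \emph{old} weighted hypothesis $r^{d}u^{\theta}\in L^{p}L^{q}$, which you do not have; under the new anisotropic Lorentz hypothesis it must be re-estimated with the new tools. Indeed the paper's proof applies its key estimate (Lemma \ref{lem3.6}: $\int\frac{|u^{\theta}|}{r}|f|^{2}\,dx\leq C_{\epsilon}\|r^{d}u^{\theta}\|_{L^{q_{v}}_{v}L^{q_{h},w}_{h}}^{p}\|f\|_{L^{2}}^{2}+\epsilon\|\nabla f\|_{L^{2}}^{2}$) \emph{twice}: once with $f=\nabla\frac{u^{r}}{r}$ for the stretching term, closed via Wei's inequalities $\|\nabla\frac{u^{r}}{r}\|_{L^{2}}\leq\|\Gamma\|_{L^{2}}$ and $\|\nabla^{2}\frac{u^{r}}{r}\|_{L^{2}}\leq\|\partial_{3}\Gamma\|_{L^{2}}$ (Lemma \ref{lem3.2}), and once with $f=\Phi,\Gamma$ for the coupling term in the $\Gamma$-equation. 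Your plan handles only the second application and has no valid mechanism for the first.

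On the part you do address, your estimate of $I$ is in the right spirit and essentially reproduces the paper's Lemma \ref{lem3.6}: O'Neil's H\"older inequality in the horizontal Lorentz spaces, a Lorentz-refined Sobolev/Hardy step (the paper's Lemma \ref{lem3.4}), and a splitting of the vertical and horizontal regularity (the paper uses the trace bound $\sup_{x_{3}}\|f(\cdot,x_{3})\|_{\dot H^{1/2}_{h}}\leq C\|f\|_{\dot H^{1}}$ of Lemma \ref{lem3.5} plus interpolation, rather than your vertical $L^{2}_{v}$--$\dot H^{1}_{v}$ interpolation, but this is cosmetic). One further detail you gloss over: for $-1\leq d<0$ the paper does not use the weight $r^{-1-d}$ directly but first writes $\frac{|u^{\theta}|}{r}=|(ru^{\theta})^{1-\tau}(r^{d}u^{\theta})^{\tau}|\,r^{-2-(d-1)\tau}$ with $\tau=\frac{1}{1-d}$, using the $M$-bound to interpolate between $ru^{\theta}$ and $r^{d}u^{\theta}$; without this your exponent bookkeeping fails for negative $d$. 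Also, the subcritical case is not handled ``directly'' in the paper but reduced to the critical one by lowering $p$; that is minor. The essential fix you need is to route the stretching term through the same anisotropic lemma rather than through Biot--Savart and $M$ alone.
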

Set $d=0$ in Theorem \ref{thm2.1}, and the following corollary is derived straight forward.
\begin{cor}\label{cor2.2}
Let $\mathbf{u}\in C([0,T);H^2(\R^3))\cap L^2_{loc}([0,T);H^3(\R^3))$ be the unique axisymmetric solution of the Navier-Stokes equations with the axisymmetric initial data $\mathbf{u_{0}}\in H^{2}(\R^{3})$ and $\mathrm{div}~\mathbf{u_{0}}=0$. If $ru_{0}^{\theta}\in L^{\infty}$ and the time $T<\infty$, the solution $\mathbf{u}$ can be continued beyond the time $T$, provided that the swirl $u^{\theta}$ satiesfies
\begin{equation}
u^{\theta} \in L^{p}_{T}L^{q_{v}}_{v}L^{q_{h},w}_{h},~~\frac{2}{p}+\frac{1}{q_{v}}+\frac{2}{q_{h}}\leq 1, ~2<q_{h}\leq\infty,~\frac{1}{q_{v}}+\frac{2}{q_{h}}<1.
\end{equation}
\end{cor}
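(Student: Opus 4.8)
Corollary~\ref{cor2.2} is the special case $d=0$ of Theorem~\ref{thm2.1} (the three exponent constraints are exactly the $d=0$ specializations of those in the theorem), so I will describe how I would prove the theorem; the corollary then follows at once. The plan is to turn the continuation of $\mathbf u$ beyond $T$ into an \emph{a priori} bound for $(\Phi,\Gamma)=(\omega^{r}/r,\omega^{\theta}/r)$ in the energy class $L^{\infty}_{T}L^{2}\cap L^{2}_{T}\dot H^{1}$ (with the natural axisymmetric measure $d\mu$), and then to close that bound by estimating the single swirl--driven forcing term through the anisotropic Lorentz norm in the hypothesis.

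Two inputs are taken for granted. First, the maximum principle for $ru^{\theta}$: since $ru_{0}^{\theta}\in L^{\infty}$ one has $\|ru^{\theta}(t)\|_{L^{\infty}}\le\|ru_{0}^{\theta}\|_{L^{\infty}}=:M$, hence $|u^{\theta}|\le M/r$ pointwise. Second, the \emph{a priori} estimates for the $(\Phi,\Gamma)$ system from the previous work, which reduce the blow--up question to controlling the energy $E=\|\Phi\|_{L^{2}}^{2}+\|\Gamma\|_{L^{2}}^{2}$; in the resulting balance every term except the swirl forcing in the $\Gamma$--equation is either conservative or absorbable by dissipation and Gronwall. Writing $\Omega=u^{\theta}/r$, that forcing equals $\partial_{3}\big((u^{\theta})^{2}/r^{2}\big)=\partial_{3}(\Omega^{2})$, and since $\omega^{r}=-\partial_{3}u^{\theta}$ yields the clean relation $\Phi=-\partial_{3}\Omega$, testing against $\Gamma$ produces the trilinear quantity
\[
\mathcal S=-2\int \Omega\,\Phi\,\Gamma\,d\mu .
\]
The whole theorem rests on bounding $\mathcal S$ by $\varepsilon\big(\|\nabla\Phi\|_{L^{2}}^{2}+\|\nabla\Gamma\|_{L^{2}}^{2}\big)$ plus a Gronwall--integrable multiple of $E$.

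To estimate $\mathcal S$ I would place the swirl factor in the hypothesis norm and the two vorticity factors in the energy/dissipation norms. The weight is distributed through $\Omega=r^{-1-d}\,(r^{d}u^{\theta})$, and the surplus negative power of $r$ near the axis is compensated by $|u^{\theta}|\le M/r$; this interpolation between the hypothesis norm and the pointwise bound is the device that makes a forcing which is \emph{quadratic} in $u^{\theta}$ controllable by a norm that is only \emph{linear} in $u^{\theta}$. The anisotropy is handled by applying H\"older first in the horizontal variables $(x_{1},x_{2})$ and then in $x_{3}$ and $t$: in each horizontal slice the weak--Lebesgue factor $L^{q_{h},w}_{h}=L^{q_{h},\infty}_{h}$ is paired with its sharp Lorentz partner $L^{q_{h}',1}_{h}$, and a two--dimensional Lorentz--Sobolev embedding (with horizontal regularity exponent fixed by the anisotropic scaling) converts the horizontal norms of $\Phi,\Gamma$ into horizontal derivatives; a Gagliardo--Nirenberg interpolation between $L^{2}$ and $\dot H^{1}$ then gives
\[
|\mathcal S|\lesssim \|r^{d}u^{\theta}\|_{L^{p}_{T}L^{q_{v}}_{v}L^{q_{h},w}_{h}}\;\big(\|\nabla\Phi\|_{L^{2}}^{2}+\|\nabla\Gamma\|_{L^{2}}^{2}\big)^{\beta}\,E^{1-\beta},
\]
with $\beta$ determined by scaling. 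The constraint $\tfrac{2}{p}+\tfrac{1}{q_{v}}+\tfrac{2}{q_{h}}\le 1-d$ is exactly what makes this interpolation scaling--admissible and forces $\beta\le 1$, while the strict inequality $\tfrac{1}{q_{v}}+\tfrac{2}{q_{h}}<1-d$ gives $\beta<1$, leaving room for Young's inequality to move the dissipation to the left with a small constant. What survives is $\tfrac{d}{dt}E+\tfrac12\big(\|\nabla\Phi\|_{L^{2}}^{2}+\|\nabla\Gamma\|_{L^{2}}^{2}\big)\le C\,g(t)\,E$ with $g\in L^{1}_{T}$, and Gronwall closes the \emph{a priori} bound.

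The main obstacle is precisely this anisotropic Lorentz estimate. One must establish the horizontal Lorentz--Sobolev embedding uniformly in the full range $2<q_{h}\le\infty$ (the condition $q_{h}>\tfrac{2}{1-d}$ being what guarantees both a positive, subunital horizontal regularity exponent and local integrability of the axis weight), and keep exact track of the powers of $r$ so that the singular weight in $\Omega=u^{\theta}/r$ is cancelled by $|ru^{\theta}|\le M$. The two genuinely borderline points are the endpoint $q_{h}=\infty$, where the horizontal factor must be treated by the pointwise bound rather than an embedding, and the scaling equality $\tfrac{2}{p}+\tfrac{1}{q_{v}}+\tfrac{2}{q_{h}}=1-d$, where the absorption has to be carried out by a small--constant rather than a Gronwall argument. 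Once $\mathcal S$ is tamed, the coupling of $\Gamma$ back to $\Phi$ and to $\mathbf b=u^{r}\mathbf{e}_{r}+u^{3}\mathbf{e}_{3}$ through Biot--Savart is routine, and is exactly where the previously established \emph{a priori} estimates for $(\Phi,\Gamma)$ are invoked.
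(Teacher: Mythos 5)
Your overall architecture coincides with the paper's: reduce continuation to an \emph{a priori} bound on $(\Phi,\Gamma)$ (the paper's Lemma \ref{lem3.3}), prove a key anisotropic estimate for $\int \frac{|u^{\theta}|}{r}|f|^{2}\,dx$ by splitting the swirl factor between the maximum-principle bound $|ru^{\theta}|\le M$ and the hypothesis norm, pairing $L^{q_{h},w}_{h}$ with its Lorentz partner horizontally, then a weighted two-dimensional Lorentz--Sobolev embedding and $L^{2}$--$\dot H^{1}$ interpolation, and close with Young plus Gronwall; this is exactly the paper's Lemma \ref{lem3.6} (whose proof also needs the trace bound $\underset{x_3}{esssup}\,\|f(\cdot,x_{3})\|_{\dot H^{1/2}_{h}}\le C\|f\|_{\dot H^{1}}$ of Lemma \ref{lem3.5}, which your sketch glosses over).

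There is, however, a genuine gap in your energy balance: the claim that ``every term except the swirl forcing in the $\Gamma$-equation is either conservative or absorbable by dissipation and Gronwall,'' with the $\Phi$-equation dismissed as routine coupling covered by prior work. The $\Phi$-equation contains the vortex-stretching term $(\omega^{r}\partial_{r}+\omega^{3}\partial_{3})\frac{u^{r}}{r}$, which is neither conservative nor absorbable, and is just as swirl-driven as your $\mathcal{S}$ (since $\omega^{r}=-\partial_{3}u^{\theta}$ and $\omega^{3}=\partial_{r}u^{\theta}+\frac{u^{\theta}}{r}$). In the paper it is handled by integrating by parts into $\int u^{\theta}\bigl(\partial_{r}\frac{u^{r}}{r}\,\partial_{3}\Phi-\partial_{3}\frac{u^{r}}{r}\,\partial_{r}\Phi\bigr)dx$, applying Cauchy--Schwarz and the maximum principle ($|u^{\theta}|^{2}\le C\frac{|u^{\theta}|}{r}$), then applying the \emph{same} new anisotropic estimate (Lemma \ref{lem3.6}) to $f=\nabla\frac{u^{r}}{r}$, and finally --- this is the step your proposal has no substitute for --- invoking Wei's inequalities $\|\nabla\frac{u^{r}}{r}\|_{L^{2}}\le\|\Gamma\|_{L^{2}}$ and $\|\nabla^{2}\frac{u^{r}}{r}\|_{L^{2}}\le\|\partial_{3}\Gamma\|_{L^{2}}$ (Lemma \ref{lem3.2}) so that the output is again expressed through $\|\Gamma\|_{L^{2}}$ and $\|\nabla\Gamma\|_{L^{2}}$ and the coupled Gronwall argument closes. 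This cannot be delegated to ``previously established a priori estimates'': those were proved under weighted Lebesgue hypotheses on $r^{d}u^{\theta}$, and under the new anisotropic Lorentz hypothesis the stretching term must be re-estimated with the new lemma. A smaller inaccuracy: at scaling equality the paper still closes by Gronwall, because the strict inequality $\frac{1}{q_{v}}+\frac{2}{q_{h}}<1-d$ forces $p<\infty$, so the hypothesis norm is integrable in time; a smallness argument is needed only at the excluded endpoints $p=\infty$ discussed in Remark \ref{rem1}.
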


\begin{rem}\label{rem1}
At the extreme points $\{p=\infty,~\frac{1}{q_{v}}+\frac{2}{q_{h}}=1,~2<q_{h}\leq\infty \}$ in Corollary \ref{cor2.2}, we can still derive regularity criteria with an additional smallness assumption
$$
\|u^{\theta}\|_{L^{\infty}_{T}L^{q_{v}}_{v}L^{q_{h},w}_{h}} \leq \epsilon,
$$
where $\epsilon$ is a sufficiently small constant. The precise proof can be dealed with in an analogous process in Section \ref{section4}.

As a matter of fact,  $u^{\theta} \in L^{\infty}_{T}L^{\infty}_{v}L^{2,w}_{h}$, since $|ru^{\theta}|\leq C \|ru_{0}^{\theta}\|_{L^{\infty}(\R^{3})}$. And the global regularity of the solutions of axisymmetric Navier-Stokes equations can be solved, if the extreme point $(p,q_{v},q_{h})=(\infty,\infty,2)$ in Corollary \ref{cor2.2} is settled.
However, the problems remain open. Recently, D. Wei \cite{Wei2016} estabilshed regularity criterion of the form $|u^{\theta}| \leq  \frac{C}{r|\ln{r}|^{\frac{3}{2}}}, r<\frac{1}{2} $. And the function $\frac{1}{r|\ln{r}|^{\frac{3}{2}}}|_{r<\frac{1}{2}}\in L^{\infty}_{T}L^{\infty}_{v}L^{2,\beta}_{h}, \beta>\frac{2}{3}$.
Therefore, it remains gaps between $L^{2,\frac{2}{3}}_{h}$ and $L^{2,w}_{h}$ in a certain sense.
\end{rem}
\begin{rem}
The Corollary \ref{cor2.2} still holds if we replace the regularity criteria by
\begin{equation}
u^{\theta}|_{r<\delta} \in L^{p}_{T}L^{q_{v}}_{v}L^{q_{h},w}_{h},
\end{equation}
where $\frac{2}{p}+\frac{1}{q_{v}}+\frac{2}{q_{h}}\leq 1, ~2<q_{h}\leq\infty,~\frac{1}{q_{v}}+\frac{2}{q_{h}}<1$ and $\delta >0$ is a arbitrary constant.

\end{rem}
Inspired by \cite{Chen1,Chen2,Koch,Z.Lei}, we have the following theorem in the critical space $L^{\infty}_{T}L^{\infty}_{v}L^{2,w}_{h}$.
\begin{thm}\label{thm2.4}
Let $\mathbf{u}$ be an axisymmetric  suitable weak solution of the Navier-Stokes equations (\ref{NS}) with the axisymmetric initial data $\mathbf{u_{0}}\in L^{2}(\R^{3})$, $\mathrm{div}~\mathbf{u_{0}}=0$, and $ru_{0}^{\theta}\in L^{\infty}(\R^{3})$. Suppose $\mathbf{b} \in L^{\infty}_{T}L^{\infty}_{v}L^{2,w}_{h}$, then $\mathbf{u}$  is smooth in $(0,T]\times \R^{3}$.
\end{thm}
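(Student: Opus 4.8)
The plan is to argue by contradiction using a rescaling (blow-up) argument together with a Liouville-type theorem for ancient solutions, in the spirit of \cite{Koch,Z.Lei}. First I would record that the hypothesis $\mathbf{b}\in L^{\infty}_{T}L^{\infty}_{v}L^{2,w}_{h}$ is a genuine generalization of the condition $r|\mathbf{b}|\le C$ of \cite{Chen1,Chen2}, since $1/r\in L^{2,w}(\R^{2})$, and that it is \emph{scaling critical}: under the parabolic rescaling $\mathbf{u}_{\lambda}(t,x)=\lambda\,\mathbf{u}(t_{0}+\lambda^{2}t,\,x_{0}+\lambda x)$ one checks, using $\|g(\lambda\cdot)\|_{L^{2,w}(\R^{2})}=\lambda^{-1}\|g\|_{L^{2,w}(\R^{2})}$ and the fact that the rescaled radius equals $r/\lambda$ when $x_{0}$ lies on the axis, that both $\|\mathbf{b}_{\lambda}\|_{L^{\infty}_{T}L^{\infty}_{v}L^{2,w}_{h}}$ and $\|r u^{\theta}_{\lambda}\|_{L^{\infty}}$ are bounded independently of $\lambda$. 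Since $\mathbf{u}$ is a suitable weak solution, the Caffarelli--Kohn--Nirenberg partial regularity theory gives a singular set of vanishing one-dimensional parabolic Hausdorff measure; combined with axisymmetry and the preserved bound $|ru^{\theta}|\le \|ru_{0}^{\theta}\|_{L^{\infty}}$, any potential singular point $z_{0}=(t_{0},x_{0})$ must lie on the axis, i.e. $x_{0}=(0,0,x_{0}^{3})$.

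I would then run the blow-up: assuming $z_{0}$ is singular, the $\varepsilon$-regularity criterion forces a scale-invariant quantity such as $r^{-2}\iint_{Q_{r}(z_{0})}|\mathbf{u}|^{3}\,dx\,dt$ to stay bounded below as $r\to 0$. Exploiting the uniform critical bound on $\mathbf{b}_{\lambda}$, the uniform bound on $r u^{\theta}_{\lambda}$, and the local energy inequality, I would extract (along a subsequence $\lambda_{k}\to 0$) a limit $\bar{\mathbf{u}}$ that is a bounded axisymmetric \emph{ancient} suitable solution, with $\bar{\mathbf{b}}\in L^{\infty}L^{\infty}_{v}L^{2,w}_{h}$ and $r\bar{u}^{\theta}\in L^{\infty}$, and which is nontrivial because the non-vanishing scale-invariant quantity passes to the limit. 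The final step is a Liouville theorem: such an ancient solution must vanish identically. I would first show the swirl is trivial (using the transport--diffusion equation for $\Gamma=ru^{\theta}$ together with its $L^{\infty}$ bound to force $\bar{u}^{\theta}\equiv 0$), reducing to the swirl-free ancient case, and then invoke the Liouville theorem of \cite{Koch} (as applied in \cite{Z.Lei}) to conclude $\bar{\mathbf{u}}\equiv 0$, contradicting nontriviality. This rules out singular points, so $\mathbf{u}$ is smooth on $(0,T]\times\R^{3}$.

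The main obstacle is the Liouville step in this \emph{weak-}$L^{2}$ horizontal setting. Unlike a subcritical hypothesis, the $L^{2,w}_{h}$ norm is exactly scale invariant and carries \emph{no smallness} on small balls (the model profile $1/r$ has the same norm at every scale), so a direct Caffarelli--Kohn--Nirenberg smallness argument cannot be closed and the compactness/Liouville route is forced. Carrying out the Liouville theorem then requires controlling the ancient poloidal field $\bar{\mathbf{b}}$ and the rescaled vorticity $(\Phi,\Gamma)=(\omega^{r}/r,\omega^{\theta}/r)$ near the axis $r=0$, where the cylindrical framework degenerates; here one must leverage the Lorentz-space interpolation and embedding properties together with the damping structure of the $(\Phi,\Gamma)$ system, rather than plain $L^{2}$ energy integration. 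I expect establishing triviality of the swirl and the decay of $\bar{\mathbf{b}}$ under only a critical (not subcritical) drift bound to be the technically hardest point.
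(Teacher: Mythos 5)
Your proposal is correct in outline and coincides with the paper's intended argument: the paper's entire proof of Theorem \ref{thm2.4} is the remark that it proceeds ``analogously to \cite{Z.Lei}'', i.e., precisely the rescaling/blow-up at a (necessarily on-axis, by CKN plus axisymmetry) singular point to a nontrivial bounded ancient axisymmetric solution, followed by a Liouville theorem in which the swirl $ru^{\theta}$ is first shown to vanish and the no-swirl Liouville theorem of \cite{Koch} is then invoked. You also correctly isolate the one step that genuinely requires new work in this setting---transferring the critical-drift (De Giorgi--Nash--Moser/oscillation) estimates of \cite{Z.Lei} from $BMO^{-1}$ drift to the class $L^{\infty}_{v}L^{2,w}_{h}$---which the paper leaves entirely implicit.
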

\begin{rem}
The proof is analogously to \cite{Z.Lei}, and we omit the details here.
\end{rem}
\section{Preliminaries}
We will give some useful \textit{a priori} estimates in the axisymmetric Navier-Stokes equations, and refer to \cite{Chen2017b,JG.Liu1,Pokorny1,Wei2016} for details.
\begin{lem}\label{lem3.1}
Assume $\mathbf{u}$ is the smooth axisymmetric solution of (\ref{NS}) on $[0,T]$. If in addition, $ru_{0}^{\theta}\in L^{\infty}(\R^{3})$, then $|ru^{\theta}|\leq C \|ru_{0}^{\theta}\|_{L^{\infty}(\R^{3})}$.
\end{lem}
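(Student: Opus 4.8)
The plan is to show that $\gamma:=ru^{\theta}$ (I use a lowercase symbol to avoid clashing with the paper's $\Gamma=\omega^{\theta}/r$) satisfies a linear convection--diffusion equation with divergence-free drift and no zeroth-order term, so that a maximum principle forces its spatial supremum to be non-increasing in time. First I would substitute $u^{\theta}=\gamma/r$ into the second equation of (\ref{NS-u}) and clear the factor $r$. A direct computation gives
$$\partial_{r}u^{\theta}=\tfrac{1}{r}\partial_{r}\gamma-\tfrac{\gamma}{r^{2}},\qquad \big(\partial_{r}^{2}+\tfrac{1}{r}\partial_{r}-\tfrac{1}{r^{2}}\big)u^{\theta}=\tfrac{1}{r}\big(\partial_{r}^{2}\gamma-\tfrac{1}{r}\partial_{r}\gamma\big),$$
and the crucial point is that the cross-term $-\tfrac{u^{r}\gamma}{r^{2}}$ produced by $u^{r}\partial_{r}u^{\theta}$ exactly cancels the reaction term $\tfrac{u^{\theta}u^{r}}{r}=\tfrac{u^{r}\gamma}{r^{2}}$. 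After multiplying by $r$ one is left with
$$\partial_{t}\gamma+(u^{r}\partial_{r}+u^{3}\partial_{3})\gamma-\big(\partial_{r}^{2}+\partial_{3}^{2}-\tfrac{1}{r}\partial_{r}\big)\gamma=0,\qquad\text{i.e.}\qquad \partial_{t}\gamma+\mathbf{b}\cdot\nabla\gamma-\Delta\gamma+\tfrac{2}{r}\partial_{r}\gamma=0,$$
where $\mathbf{b}=u^{r}\mathbf{e}_{r}+u^{3}\mathbf{e}_{3}$ and $\Delta=\partial_{r}^{2}+\tfrac{1}{r}\partial_{r}+\partial_{3}^{2}$. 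This equation carries no zeroth-order term.

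Next I would record the two structural facts that drive the estimate. The drift is divergence-free, $\nabla\cdot\mathbf{b}=\partial_{r}u^{r}+\tfrac{1}{r}u^{r}+\partial_{3}u^{3}=0$ by the fourth line of (\ref{NS-u}); and smoothness of $\mathbf{u}$ on the symmetry axis forces $u^{\theta}|_{r=0}=0$, hence $\gamma$ vanishes on $\{r=0\}$. With these in hand I would run an $L^{p}$ estimate: multiply the $\gamma$-equation by $|\gamma|^{p-2}\gamma$ and integrate in $dx=2\pi r\,dr\,dx_{3}$, so that $\tfrac{1}{p}\tfrac{d}{dt}\|\gamma\|_{L^{p}}^{p}$ is a sum of three terms, namely the convection term (which vanishes since $\nabla\cdot\mathbf{b}=0$), the diffusion term $-(p-1)\int|\gamma|^{p-2}|\nabla\gamma|^{2}\,dx\le 0$, and the singular-drift term $-\tfrac{4\pi}{p}\int_{\mathbb{R}}\big[|\gamma|^{p}\big]_{r=0}^{r=\infty}\,dx_{3}$, which vanishes because $\gamma=0$ on the axis and decays at infinity. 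Hence $\tfrac{d}{dt}\|\gamma\|_{L^{p}}^{p}\le 0$ and $\|\gamma(t)\|_{L^{p}}\le\|\gamma_{0}\|_{L^{p}}$ for every finite $p$.

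Since $ru_{0}^{\theta}$ is assumed only in $L^{\infty}$, I would extract the stated $L^{\infty}$ bound by comparison rather than by letting $p\to\infty$: the constants $\pm\|\gamma_{0}\|_{L^{\infty}}$ are themselves solutions of the homogeneous $\gamma$-equation (every derivative, including $\tfrac{2}{r}\partial_{r}$, annihilates a constant), so by linearity $w:=\gamma-\|\gamma_{0}\|_{L^{\infty}}$ solves the same equation with $w|_{t=0}\le 0$; the maximum principle gives $w\le 0$, and symmetrically $\gamma\ge-\|\gamma_{0}\|_{L^{\infty}}$, whence $|ru^{\theta}|\le\|ru_{0}^{\theta}\|_{L^{\infty}}$ (so in fact $C=1$). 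I expect the main obstacle to be the rigorous justification of this maximum principle rather than its formal content: one must control $\gamma$ at spatial infinity (supplied by the smoothness and the $H^{2}$-class of $\mathbf{u}$, e.g.\ via a penalization $w-\varepsilon t$ together with a cut-off in $r$ and $x_{3}$) and simultaneously handle the singular coefficient $\tfrac{2}{r}\partial_{r}$ at the axis. The vanishing of $\gamma$ on $\{r=0\}$ is precisely what prevents this singular term from spoiling either the boundary integral above or the comparison argument.
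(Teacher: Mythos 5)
The paper never proves Lemma \ref{lem3.1} itself --- it defers to the references \cite{Pokorny1,JG.Liu1} (see also \cite{Chen2017b,Wei2016}) --- and the proof given there is exactly the argument you propose: $\gamma=ru^{\theta}$ satisfies the zeroth-order-free drift--diffusion equation $\partial_{t}\gamma+\mathbf{b}\cdot\nabla\gamma-\Delta\gamma+\frac{2}{r}\partial_{r}\gamma=0$, vanishes on the axis, and hence obeys a maximum principle giving $|ru^{\theta}|\leq\|ru_{0}^{\theta}\|_{L^{\infty}}$ (so $C=1$). Your derivation of the equation (including the cancellation of $\frac{u^{r}u^{\theta}}{r}$), the comparison with constant solutions, and your identification of the only genuine technical point --- justifying the maximum principle near the axis and at spatial infinity --- are all correct and match the standard proof.
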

\begin{lem}[\cite{Wei2016}]\label{lem3.2}
\begin{equation}
\|\nabla\frac{u^{r}}{r}\|_{L^{2}(\R^{3})}\leq~\|\Gamma\|_{L^{2}(\R^{3})},~~\|\nabla^{2}\frac{u^{r}}{r}\|_{L^{2}(\R^{3})}\leq~\|\partial_{3}\Gamma\|_{L^{2}(\R^{3})}.
\end{equation}
\end{lem}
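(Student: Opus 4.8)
The plan is to reduce both inequalities to a single scalar elliptic equation for $f:=\frac{u^{r}}{r}$ and then run weighted energy estimates over $\R^{3}$, keeping careful track of the contributions on the symmetry axis $\{r=0\}$. First I would recover $f$ from the vorticity. Writing $\mathbf{b}=u^{r}\mathbf{e}_{r}+u^{3}\mathbf{e}_{3}$ through a stream function $\psi$ with $u^{r}=-\partial_{3}\psi$ and $u^{3}=\partial_{r}\psi+\frac{\psi}{r}$ (which is consistent with $\partial_{r}u^{r}+\frac{u^{r}}{r}+\partial_{3}u^{3}=0$), the relation $\omega^{\theta}=\partial_{3}u^{r}-\partial_{r}u^{3}$ gives $\omega^{\theta}=-L\psi$ with $L:=\partial_{r}^{2}+\frac{1}{r}\partial_{r}-\frac{1}{r^{2}}+\partial_{3}^{2}$; applying $\partial_{3}$ yields $Lu^{r}=\partial_{3}\omega^{\theta}$. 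The crucial algebraic observation is the identity $L(rg)=r\big(\partial_{r}^{2}+\frac{3}{r}\partial_{r}+\partial_{3}^{2}\big)g=:r\,\widetilde{\Delta}g$, where $\widetilde{\Delta}$ is precisely the Laplacian acting on axisymmetric functions in $\R^{5}$. Inserting $u^{r}=rf$ and $\omega^{\theta}=r\Gamma$ collapses the problem to the clean Poisson-type relation $\widetilde{\Delta}f=\partial_{3}\Gamma$, so that both target inequalities become elliptic estimates for $\widetilde{\Delta}$ in the $\R^{3}$ measure $r\,dr\,dx_{3}$.

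For the first inequality I would test $\widetilde{\Delta}f=\partial_{3}\Gamma$ against $f$ over $\R^{3}$. Writing $\widetilde{\Delta}f=\frac{1}{r^{3}}\partial_{r}(r^{3}\partial_{r}f)+\partial_{3}^{2}f$ and integrating by parts in $r$ and $x_{3}$, the $\partial_{3}^{2}$ term contributes $-\int|\partial_{3}f|^{2}$, while the radial term contributes $-\int|\partial_{r}f|^{2}-c\int_{\R}|f(0,x_{3})|^{2}\,dx_{3}$ for some $c>0$, the axis term entering with the favorable sign. Discarding that nonpositive axis term and moving $\partial_{3}$ off $\Gamma$ by a further integration by parts, I obtain $\|\nabla f\|_{L^{2}}^{2}\le -\int(\partial_{3}\Gamma)f=\int\Gamma\,\partial_{3}f\le\|\Gamma\|_{L^{2}}\|\partial_{3}f\|_{L^{2}}\le\|\Gamma\|_{L^{2}}\|\nabla f\|_{L^{2}}$, which is exactly the stated bound with constant one.

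For the second inequality I would instead use that $\|\widetilde{\Delta}f\|_{L^{2}(\R^{3})}=\|\partial_{3}\Gamma\|_{L^{2}(\R^{3})}$ holds directly from the equation, and then establish the comparison $\|\nabla^{2}f\|_{L^{2}(\R^{3})}\le\|\widetilde{\Delta}f\|_{L^{2}(\R^{3})}$. Expanding $(\widetilde{\Delta}f)^{2}-|\nabla^{2}f|^{2}$ in the cylindrical Hessian entries $\partial_{r}^{2}f,\ \frac{1}{r}\partial_{r}f,\ \partial_{3}^{2}f,\ \partial_{r}\partial_{3}f$ and integrating the cross terms by parts, the mixed term $\int\partial_{r}^{2}f\cdot\frac{1}{r}\partial_{r}f$ vanishes because $\partial_{r}f|_{r=0}=0$ (since $f=u^{r}/r$ is even in $r$), while the identity $\int\partial_{r}^{2}f\,\partial_{3}^{2}f=\int(\partial_{r}\partial_{3}f)^{2}+(\text{axis})$ and the remaining cross term combine to leave only the manifestly nonnegative remainder, a positive multiple of $\int_{\R^{3}}\frac{|\partial_{r}f|^{2}}{r^{2}}\,dx$ plus a positive multiple of $\int_{\R}|\partial_{3}f(0,x_{3})|^{2}\,dx_{3}$. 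Dropping these gives $\|\nabla^{2}f\|_{L^{2}}^{2}\le\|\widetilde{\Delta}f\|_{L^{2}}^{2}=\|\partial_{3}\Gamma\|_{L^{2}}^{2}$.

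The main obstacle is precisely the gap between the geometric operator $\widetilde{\Delta}$ and the genuine three-dimensional Laplacian: the usual Calder\'on--Zygmund identity $\|\nabla^{2}f\|_{L^{2}}=\|\Delta f\|_{L^{2}}$ is not available, so the bounds with sharp constant one must be squeezed out by weighted integration by parts, and the argument hinges on two delicate points. First, one must justify the regularity and decay of $f=u^{r}/r$, in particular its evenness in $r$, so that $\partial_{r}f|_{r=0}=0$; second, one must verify that \emph{every} boundary term generated on the axis $\{r=0\}$ either vanishes through the $r$-weight or carries exactly the sign that makes the two inequalities hold with constant one. Controlling these axis contributions is the heart of the proof.
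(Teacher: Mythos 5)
The paper itself gives no proof of Lemma \ref{lem3.2} --- it is quoted from \cite{Wei2016} --- and your argument reproduces essentially the route of that original proof: reduce to the identity $\bigl(\partial_{r}^{2}+\tfrac{3}{r}\partial_{r}+\partial_{3}^{2}\bigr)\tfrac{u^{r}}{r}=\partial_{3}\Gamma$ (which in fact follows directly from $\omega^{\theta}=\partial_{3}u^{r}-\partial_{r}u^{3}$ and the divergence-free condition, so the stream-function detour is not even needed) and then perform weighted integration by parts in the measure $r\,dr\,dx_{3}$, with the axis terms carrying favorable signs. Your outline is correct --- writing $f=u^{r}/r$ and expanding the cylindrical Hessian, one gets exactly $\|\widetilde{\Delta}f\|_{L^{2}}^{2}-\|\nabla^{2}f\|_{L^{2}}^{2}=8\int_{\R^{3}}\tfrac{|\partial_{r}f|^{2}}{r^{2}}\,dx+c\int_{\R}|\partial_{3}f(0,x_{3})|^{2}\,dx_{3}\geq 0$ with $c>0$, as you claim --- the only cosmetic difference from Wei's argument being that he may invoke the Plancherel identity $\|\nabla^{2}f\|_{L^{2}}=\|\Delta f\|_{L^{2}}$ for the genuine Laplacian and check the sign of the cross terms coming from $\tfrac{2}{r}\partial_{r}$, whereas you expand the full Hessian directly; the two computations are the same.
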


\begin{lem}\label{lem3.3}\cite{Chen2017b}

\noindent Let $\mathbf{u}\in C([0,T);H^2(\R^3))\cap L^2_{loc}([0,T);H^3(\R^3))$ be the unique axisymmetric solution of the Navier-Stokes equations with the axisymmetric initial data $\mathbf{u_{0}}\in H^{2}(\R^{3})$ and $\mathrm{div}~\mathbf{u_{0}}=0$.
If in addition, $T<\infty$ and $\|\Gamma\|_{L^{\infty}((0,T);L^{2}(\R^{3}))} < \infty$, then $\mathbf{u}$ can be continued beyond $T$.
\end{lem}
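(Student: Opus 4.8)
The plan is to reduce the continuation problem to an a priori enstrophy bound and then close that bound using the two preparatory lemmas. By the Leray--Hopf energy inequality together with $\mathbf{u_{0}}\in H^{2}$ one has the basic control $\mathbf{u}\in L^{\infty}((0,T);L^{2})\cap L^{2}((0,T);\dot H^{1})$, so it suffices to establish $\sup_{0\le t<T}\|\nabla\mathbf{u}(t)\|_{L^{2}}<\infty$. Once the enstrophy is bounded, the Sobolev embedding $\dot H^{1}(\R^{3})\hookrightarrow L^{6}(\R^{3})$ gives $\mathbf{u}\in L^{\infty}((0,T);L^{6})$, which is a Prodi--Serrin condition with $(p,q)=(\infty,6)$; the standard Prodi--Serrin continuation theory, combined with the $H^{2}$ local well-posedness, then lets $\mathbf{u}$ be restarted near $T$ and extended beyond it. Thus the whole argument is aimed at bounding the three vorticity components $\omega^{r},\omega^{\theta},\omega^{3}$ in $L^{\infty}((0,T);L^{2})$.

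The key input is the hypothesis $\|\Gamma\|_{L^{\infty}((0,T);L^{2})}<\infty$, which I would exploit first through Lemma \ref{lem3.2}: it immediately yields $\|\nabla\frac{u^{r}}{r}\|_{L^{\infty}((0,T);L^{2})}\le\|\Gamma\|_{L^{\infty}((0,T);L^{2})}<\infty$, so $\frac{u^{r}}{r}\in L^{\infty}((0,T);L^{6})$. This controls the singular factor $\frac{u^{r}}{r}$ through which the poloidal drift $\mathbf{b}$ couples to the swirl. Lemma \ref{lem3.1} supplies the second critical ingredient, $\|ru^{\theta}\|_{L^{\infty}((0,T)\times\R^{3})}\le C\|ru_{0}^{\theta}\|_{L^{\infty}}$. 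Together these two facts are the only genuine nonlinear information beyond the energy, and every subsequent estimate must be arranged to consume exactly them.

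Next I would work with the coupled $(\Phi,\Gamma)$ system, $\Phi=\frac{\omega^{r}}{r}$, $\Gamma=\frac{\omega^{\theta}}{r}$. The $\Gamma$-equation takes the form $\partial_{t}\Gamma+\mathbf{b}\cdot\nabla\Gamma-(\Delta+\frac{2}{r}\partial_{r})\Gamma=\frac{2u^{\theta}\partial_{3}u^{\theta}}{r^{2}}$. Testing against $\Gamma$, the transport term drops since $\mathrm{div}\,\mathbf{b}=0$, the diffusion yields a nonnegative dissipation controlling $\|\nabla\Gamma\|_{L^{2}}^{2}$ (the $\frac{2}{r}\partial_{r}$ weight contributing a favorable boundary sign at the axis), and the source, after an integration by parts in $x_{3}$ and the identity $\partial_{3}u^{\theta}=-\omega^{r}=-r\Phi$, becomes a trilinear term of the type $\int\frac{u^{\theta}}{r}\,\Phi\,\Gamma\,dx$. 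This is where $\Phi$ enters, so I would run a parallel energy estimate on $\Phi$ (propagating $\Phi\in L^{\infty}((0,T);L^{2})$ from the $H^{2}$ data and gaining $\Phi\in L^{2}((0,T);\dot H^{1})$), and set up a closed differential inequality for $\|\Phi\|_{L^{2}}^{2}+\|\Gamma\|_{L^{2}}^{2}$ in which the singular cross-terms are tamed by $\|ru^{\theta}\|_{\infty}$ together with the $L^{6}$ and (via Lemma \ref{lem3.2}) second-derivative control of $\frac{u^{r}}{r}$, absorbing the top-order pieces into the dissipation. A Gronwall argument on the finite interval $(0,T)$ then gives $\Gamma\in L^{2}((0,T);\dot H^{1})$ and $\Phi\in L^{\infty}((0,T);L^{2})$. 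From these, $\omega^{r}=r\Phi$ and $\omega^{\theta}=r\Gamma$ are recovered, the swirl vorticity $\omega^{r},\omega^{3}$ is closed by an $H^{1}$ estimate on the $u^{\theta}$-equation whose reaction term $\frac{u^{\theta}u^{r}}{r}=u^{\theta}\cdot\frac{u^{r}}{r}$ is handled by H\"older using $\frac{u^{r}}{r}\in L^{6}$, and the full enstrophy $\|\omega\|_{L^{\infty}((0,T);L^{2})}<\infty$ follows.

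The main obstacle I anticipate is precisely this swirl--poloidal coupling through the singular $1/r$ terms. The datum $\|ru^{\theta}\|_{\infty}$ is only scaling-critical, and the naive bound $\frac{(u^{\theta})^{2}}{r^{2}}\le\frac{C}{r^{4}}$ is far too singular near the axis, so neither source term can be dominated by the dissipation by scaling alone; moreover the $\Gamma$-source feeds back on $\Phi$, making the estimate genuinely self-referential. The resolution must win a small margin from the true gains supplied by the hypothesis -- the $L^{6}$ and $\dot H^{2}$ control of $\frac{u^{r}}{r}$ from Lemma \ref{lem3.2} and the parabolic smoothing $\Gamma\in L^{2}_{t}\dot H^{1}$ -- and exploit the anisotropic two-dimensional structure of the $(\Phi,\Gamma)$ nonlinearities. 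Making the two energy estimates feed each other without losing that margin, and tracking the $r$-weights carefully so that each integration by parts near the axis is justified, is where the real work lies.
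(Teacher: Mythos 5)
Your opening reduction and your first substantive step are sound and match the start of the quoted proof: Lemma \ref{lem3.2} plus the Sobolev embedding $\dot H^{1}(\R^{3})\hookrightarrow L^{6}(\R^{3})$ turns the hypothesis $\|\Gamma\|_{L^{\infty}((0,T);L^{2})}<\infty$ into $\frac{u^{r}}{r}\in L^{\infty}((0,T);L^{6})$. But note that this paper does not reprove the lemma---it quotes it from \cite{Chen2017b}---and the proof there ends essentially where you begin your detour: one feeds $\frac{u^{r}}{r}\in L^{\infty}_{T}L^{6}$ into the known Prodi--Serrin-type regularity criterion for axisymmetric solutions in terms of $\frac{u^{r}}{r}$ alone (condition $\frac{2}{p}+\frac{3}{q}\le 1$; here $\frac{2}{p}+\frac{3}{q}=\frac{1}{2}$, strictly subcritical), which gives continuation directly, with no further energy estimates on $(\Phi,\Gamma)$ and no global enstrophy bound needed.

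Your proposed closure instead reruns the coupled $(\Phi,\Gamma)$ energy estimates and must control the trilinear terms $\int_{\R^{3}}\frac{|u^{\theta}|}{r}\bigl(|\Phi|^{2}+|\Gamma|^{2}\bigr)\,dx$, and this is a genuine gap: under the hypotheses of this lemma there is no tool for it. The lemma assumes only $\mathbf{u_{0}}\in H^{2}$ and $\Gamma\in L^{\infty}_{T}L^{2}$ (it does not even assume $ru_{0}^{\theta}\in L^{\infty}$, so your use of Lemma \ref{lem3.1} is unjustified here), and even granting $\|ru^{\theta}\|_{L^{\infty}}<\infty$, the resulting bound $\frac{|u^{\theta}|}{r}\le Cr^{-2}$ is exactly the critical weight for which the two-dimensional Hardy inequality fails, so $\int\frac{f^{2}}{r^{2}}\,dx$ is not dominated by $\|\nabla f\|_{L^{2}}^{2}$. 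Taming precisely these terms is what the extra hypothesis $r^{d}u^{\theta}\in L^{p}_{T}L^{q_{v}}_{v}L^{q_{h},w}_{h}$ and Lemma \ref{lem3.6} are for in Theorem \ref{thm2.1}; you concede the point (``where the real work lies''), but that concession is the entire content of the step, and within this paper's architecture the move is backwards: the $(\Phi,\Gamma)$ Gronwall estimate of Section \ref{section4} is what \emph{verifies} the hypothesis of Lemma \ref{lem3.3}, not what proves the lemma. Two secondary defects: your claimed ``$\dot H^{2}$ control of $\frac{u^{r}}{r}$'' via Lemma \ref{lem3.2} requires $\partial_{3}\Gamma\in L^{2}_{t,x}$, i.e.\ dissipation of $\Gamma$, which the purely $L^{\infty}_{T}L^{2}$ hypothesis does not supply; and ``recovering'' $\omega^{r}=r\Phi$, $\omega^{\theta}=r\Gamma$ in $L^{2}$ from $\Phi,\Gamma\in L^{\infty}_{T}L^{2}$ fails at large $r$ because the weight $r$ is unbounded, so the asserted full enstrophy bound would not follow even if your Gronwall argument closed.
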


For convenience of readers, we will list some basic properties of Lorentz space.
\begin{lem}\label{Lorentz}
We denote $L^{p,q},0<p,q\leq \infty$ the Lorentz space.
\begin{enumerate}
\item[\rm{(i)}] If $0<p,r<\infty,0<q\leq \infty$,
\begin{equation}
\||g|^{r}\|_{L^{p,q}}=\|g\|_{L^{pr,qr}}^{r}.
\end{equation}

\item[\rm{(ii)}][pointwise product] Let $1<p<\infty,1\leq q \leq \infty,\frac{1}{p}+\frac{1}{p'}=1,\frac{1}{q}+\frac{1}{q'}=1$. Then pointwise multiplication is a bounded bilinear operator:
\begin{enumerate}
\item[\rm{a)}] from $L^{p,q}\times L^{\infty}$ to $L^{p,q}$;
\item[\rm{b)}] from $L^{p,q}\times L^{p',q'}$ to $L^{1}$;
\item[\rm{c)}] from $L^{p,q}\times L^{p_{1},q_{1}}$ to $L^{p_{2},q_{2}}$, for $1<p_{1},p_{2}<\infty,\frac{1}{p_{2}}=\frac{1}{p}+\frac{1}{p_{1}},\frac{1}{q_{2}}=\frac{1}{q}+\frac{1}{q_{1}}$;
\item[\rm{d)}] if $q<\infty$, the dual space of $L^{p,q}$ is $L^{p',q'}$.
\end{enumerate}
\item[\rm{(iii)}][convolution] Let $1<p<\infty,1\leq q \leq \infty,\frac{1}{p}+\frac{1}{p'}=1,\frac{1}{q}+\frac{1}{q'}=1$. Then convolution is a bounded bilinear operator:
\begin{enumerate}
\item[\rm{a)}] from $L^{p,q}\times L^{1}$ to $L^{p,q}$;
\item[\rm{b)}] from $L^{p,q}\times L^{p',q'}$ to $L^{\infty}$;
\item[\rm{c)}] from $L^{p,q}\times L^{p_{1},q_{1}}$ to $L^{p_{2},q_{2}}$, for $1<p_{1},p_{2}<\infty,1+\frac{1}{p_{2}}=\frac{1}{p}+\frac{1}{p_{1}},\frac{1}{q_{2}}=\frac{1}{q}+\frac{1}{q_{1}}$.
\end{enumerate}
\end{enumerate}
\end{lem}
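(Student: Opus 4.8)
The plan is to reduce every assertion to the decreasing rearrangement $f^{*}$ and a short list of elementary facts it obeys. Write $d_{f}(\lambda)=|\{\,|f|>\lambda\,\}|$ and $f^{*}(t)=\inf\{\,\lambda>0:d_{f}(\lambda)\le t\,\}$, so that for $q<\infty$
\[
\|f\|_{L^{p,q}}=\Big(\int_{0}^{\infty}\big(t^{1/p}f^{*}(t)\big)^{q}\,\frac{dt}{t}\Big)^{1/q},\qquad \|f\|_{L^{p,\infty}}=\sup_{t>0}t^{1/p}f^{*}(t).
\]
For (i), the equivalence $|g|^{r}>\lambda\iff|g|>\lambda^{1/r}$ gives $d_{|g|^{r}}(\lambda)=d_{g}(\lambda^{1/r})$ and hence $(|g|^{r})^{*}(t)=(g^{*}(t))^{r}$. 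Substituting this into the defining integral and comparing exponents, $\big(t^{1/p}(g^{*}(t))^{r}\big)^{q}=\big(t^{1/(pr)}g^{*}(t)\big)^{qr}$, which is exactly the claimed identity $\||g|^{r}\|_{L^{p,q}}=\|g\|_{L^{pr,qr}}^{r}$; the case $q=\infty$ is the same computation with a supremum.

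For (ii), items a) and b) are the monotonicity of the rearrangement and the Hardy--Littlewood inequality. Item a) is immediate, since $|fg|\le\|g\|_{L^{\infty}}|f|$ a.e.\ forces $(fg)^{*}(t)\le\|g\|_{L^{\infty}}f^{*}(t)$. For b) I would start from the Hardy--Littlewood bound $\int|fg|\le\int_{0}^{\infty}f^{*}(t)g^{*}(t)\,dt$, factor $t^{1/p}t^{1/p'}=t$, and apply H\"older on $(0,\infty)$ with the measure $dt/t$ and conjugate exponents $q,q'$ to get $\int|fg|\le\|f\|_{L^{p,q}}\|g\|_{L^{p',q'}}$. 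The core is item c), for which the right tool is the submultiplicative bound $(fg)^{*}(t)\le f^{*}(t/2)g^{*}(t/2)$, proved by observing that $\{\,|fg|>f^{*}(t/2)g^{*}(t/2)\,\}\subset\{\,|f|>f^{*}(t/2)\,\}\cup\{\,|g|>g^{*}(t/2)\,\}$, a set of measure at most $t$. Writing $t^{1/p_{2}}=2^{1/p_{2}}(t/2)^{1/p}(t/2)^{1/p_{1}}$ via $\tfrac1{p_{2}}=\tfrac1p+\tfrac1{p_{1}}$ yields
\[
t^{1/p_{2}}(fg)^{*}(t)\le 2^{1/p_{2}}\big[(t/2)^{1/p}f^{*}(t/2)\big]\big[(t/2)^{1/p_{1}}g^{*}(t/2)\big];
\]
taking the $L^{q_{2}}(dt/t)$ quasi-norm, using the scale invariance of $dt/t$ under $t\mapsto 2t$, and applying H\"older with the conjugate exponents $q/q_{2},\,q_{1}/q_{2}$ (legitimate because $\tfrac1{q_{2}}=\tfrac1q+\tfrac1{q_{1}}$ forces $q_{2}\le q,q_{1}$) closes c).

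Items (ii)d) and all of (iii) then follow from these estimates together with standard interpolation and duality. One inclusion of d) is already contained in b), which exhibits $L^{p',q'}$ as a space of bounded functionals on $L^{p,q}$; the converse identification of the full dual for $q<\infty$ I would take from the classical rearrangement-invariant duality theory of Lorentz spaces (due to Hunt). For convolutions, item (iii)b) is an immediate corollary of (ii)b): by translation and reflection invariance of $f^{*}$, $|(f*g)(x)|\le\int|f(y)|\,|g(x-y)|\,dy\le\|f\|_{L^{p,q}}\|g\|_{L^{p',q'}}$ uniformly in $x$. Item a) I would obtain from Minkowski's integral inequality in the (normable) space $L^{p,q}$, namely $\|f*h\|_{L^{p,q}}\le\int\|f(\cdot-y)\|_{L^{p,q}}|h(y)|\,dy=\|f\|_{L^{p,q}}\|h\|_{L^{1}}$. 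For the general exponent relation $1+\tfrac1{p_{2}}=\tfrac1p+\tfrac1{p_{1}}$ in c), the cleanest route is bilinear real interpolation: one upgrades the classical Young inequality $L^{a}\times L^{b}\to L^{c}$, $1+\tfrac1c=\tfrac1a+\tfrac1b$, through the identification $L^{p,q}=(L^{a_{0}},L^{a_{1}})_{\theta,q}$ of Lorentz spaces as real interpolation spaces, the interpolation theorem tracking the third index via $\tfrac1{q_{2}}=\tfrac1q+\tfrac1{q_{1}}$.

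I expect the genuine obstacles to be concentrated in (ii)d) and (iii)c). The duality statement needs the full rearrangement-invariant duality theory rather than the one-line embedding supplied by (ii)b), and the convolution bound (iii)c) cannot be reached by the naive pointwise rearrangement inequality, which is too lossy to land in the correct second index $q_{2}$; it requires either the bilinear interpolation machinery above or O'Neil's sharper estimates built on the maximal rearrangement $f^{**}(t)=t^{-1}\int_{0}^{t}f^{*}(s)\,ds$. Everything else reduces to monotonicity of $f^{*}$, the Hardy--Littlewood inequality, and a single application of H\"older on the half-line.
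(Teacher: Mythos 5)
The paper offers no proof of this lemma at all: it is stated as a catalogue of classical Lorentz-space facts (``we will list some basic properties''), implicitly quoted from the standard literature (Hunt's survey, O'Neil's convolution theorem), so there is no in-paper argument to compare yours against. Your reconstruction is the standard one and is correct in outline: (i) is the exact identity $d_{|g|^{r}}(\lambda)=d_{g}(\lambda^{1/r})$, hence $(|g|^{r})^{*}=(g^{*})^{r}$; (ii)a)--c) follow from monotonicity of the rearrangement, Hardy--Littlewood plus H\"older on $(0,\infty)$ with $dt/t$, and the submultiplicative bound $(fg)^{*}(t)\le f^{*}(t/2)g^{*}(t/2)$, and your H\"older step with exponents $q/q_{2}$, $q_{1}/q_{2}$ is legitimate precisely because $1/q_{2}=1/q+1/q_{1}$ (with the obvious modification when $q$ or $q_{1}$ is infinite); (iii)a) needs the equivalent norm built from $f^{**}$ (available since $p>1$) before Minkowski's integral inequality applies, which you flag by calling the space normable. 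You also correctly isolate the two genuinely non-elementary points, (ii)d) and (iii)c), and it is appropriate to quote Hunt for the duality. One substantive caveat concerns your preferred route for (iii)c): the basic Lions--Peetre bilinear real-interpolation theorem produces a target secondary index $s$ satisfying $1/s=1/q+1/q_{1}-1$, i.e.\ it lands only in the strictly larger space $L^{p_{2},s}\supset L^{p_{2},q_{2}}$, so it does not by itself recover the stated index relation $1/q_{2}=1/q+1/q_{1}$; to obtain the claim as written you should run your fallback instead, namely O'Neil's inequality $(f*g)^{**}(t)\le t\,f^{**}(t)\,g^{**}(t)+\int_{t}^{\infty}f^{*}(s)g^{*}(s)\,ds$, or invoke a sharpened bilinear interpolation theorem that tracks secondary indices. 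With that substitution your proposal is complete at the level of detail appropriate for a lemma the paper itself leaves unproved.
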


We give a general Sobolev-Hardy-Littlewood inequality.
\begin{lem} \label{lem3.4}We assume $2\leq p<\infty,~0\leq s<\frac{n}{p},1\leq r\leq \infty$. For all $f\in \dot{B}_{2,r}^{s+n(\frac{1}{2}-\frac{1}{p})}(\R^{n})$, we have
\begin{equation}
\|\frac{f}{|x|^{s}}\|_{L^{p,r}(\R^{n})}\leq ~C~\|f\|_{\dot{B}_{2,r}^{s+n(\frac{1}{2}-\frac{1}{p})}(\R^{n})}.
\end{equation}
\end{lem}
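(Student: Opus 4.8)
The plan is to factor the singular weight out of $f$ and to exploit the scaling identity $\sigma:=s+n(\frac12-\frac1p)$ through the Lorentz H\"older inequality of Lemma \ref{Lorentz}. I would first set $\frac{1}{p_1}:=\frac1p-\frac sn$; the hypothesis $s<n/p$ guarantees $0<\frac{1}{p_1}\le\frac1p\le\frac12$, so that $2\le p_1<\infty$ (with equality $p_1=2$ only in the corner $(p,s)=(2,0)$), and a direct computation gives $n(\frac12-\frac1{p_1})=\sigma$. The strategy is then threefold: (i) locate the weight $|x|^{-s}$ in a weak Lebesgue space; (ii) place $f$ in the Lorentz space $L^{p_1,r}$ via a sharp Sobolev embedding; (iii) multiply the two factors by Lorentz H\"older so as to land exactly in $L^{p,r}$.

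First I would record that, for $0<s<n$, the radial weight lies in the weak space $|x|^{-s}\in L^{n/s,\infty}(\R^n)$. Indeed its distribution function is $|\{\,|x|^{-s}>\lambda\,\}|=|\{\,|x|<\lambda^{-1/s}\,\}|=c_n\lambda^{-n/s}$, whence $\||x|^{-s}\|_{L^{n/s,\infty}}=c_n^{s/n}<\infty$; alternatively one invokes Lemma \ref{Lorentz}(i) applied to $g=|x|^{-1}\in L^{n,\infty}$. Since $s<n/p\le n/2<n$, we have $1<n/s<\infty$, so this factor is admissible in the Lorentz H\"older inequality.

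The genuine analytic input is the sharp Sobolev--Lorentz embedding $\dot{B}_{2,r}^{\sigma}(\R^n)\hookrightarrow L^{p_1,r}(\R^n)$, which is available because $\sigma=n(\frac12-\frac1{p_1})$ with $2<p_1<\infty$, equivalently $0<\sigma<\frac n2$ (this is exactly the range forced by $0<s<n/p$). I would derive it by real interpolation: starting from the classical homogeneous Sobolev embeddings $\dot{H}^{\sigma_i}=\dot{B}_{2,2}^{\sigma_i}\hookrightarrow L^{q_i}$ with $\frac{1}{q_i}=\frac12-\frac{\sigma_i}{n}$, one chooses $\sigma_0<\sigma<\sigma_1$ in $(0,\tfrac n2)$ and applies the interpolation identities $(\dot{B}_{2,2}^{\sigma_0},\dot{B}_{2,2}^{\sigma_1})_{\theta,r}=\dot{B}_{2,r}^{\sigma}$ and $(L^{q_0},L^{q_1})_{\theta,r}=L^{p_1,r}$ with $\sigma=(1-\theta)\sigma_0+\theta\sigma_1$, after which the relation $\frac1{p_1}=\frac12-\frac{\sigma}{n}$ comes out automatically. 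This step carries all the real content of the lemma.

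Finally I would combine the two ingredients: by Lemma \ref{Lorentz}(ii)c pointwise multiplication is bounded $L^{n/s,\infty}\times L^{p_1,r}\to L^{p,r}$, the first Lorentz exponents adding as $\frac sn+\frac1{p_1}=\frac1p$ and the second as $\frac1\infty+\frac1r=\frac1r$, so that
\[
\Big\|\frac{f}{|x|^{s}}\Big\|_{L^{p,r}(\R^n)}\le C\,\big\||x|^{-s}\big\|_{L^{n/s,\infty}(\R^n)}\,\|f\|_{L^{p_1,r}(\R^n)}\le C\,\|f\|_{\dot{B}_{2,r}^{\sigma}(\R^n)}.
\]
The only routine obstructions are the degenerate cases: when $s=0$ the weight is identically $1$, the H\"older step is replaced by the trivial product bound of Lemma \ref{Lorentz}(ii)a, and the claim reduces to the pure embedding; and the endpoint $(p,s)=(2,0)$, where $p_1=2$, $\sigma=0$, which for $r=2$ is simply the Plancherel/Hardy identity and must be checked separately. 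I expect the main obstacle to be establishing the sharp Sobolev--Lorentz embedding of the third paragraph with the indices verified; once it is granted, the weighted estimate follows in one line from the Lorentz H\"older inequality.
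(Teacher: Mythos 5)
Your proposal is correct and follows essentially the same route as the paper: place the weight $|x|^{-s}$ in $L^{n/s,\infty}$, apply the Lorentz--H\"older inequality of Lemma \ref{Lorentz}, and reduce to the Sobolev-type embedding of $\dot{B}_{2,r}^{s+n(\frac12-\frac1p)}$ into the Lorentz space $L^{q,r}$, $\frac1q=\frac1p-\frac sn$, obtained by interpolation. The only cosmetic difference is that the paper factors this embedding through the intermediate space $\dot{B}_{p,r}^{s}$, whereas you produce it directly by real interpolation of classical Sobolev embeddings (and you are more careful than the paper about the degenerate corner $s=0$, $p=2$, which in any case is not the regime used in Lemma \ref{lem3.6}).
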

\begin{proof}Set $\frac{1}{p}=\frac{s}{n}+\frac{1}{q}$, $p<q<\infty$. Apply Lemma \ref{Lorentz} and intepolation, successively, we have
\begin{align*}
\|\frac{f}{|x|^{s}}\|_{L^{p,r}(\R^{n})}&\leq ~C~ \|\frac{1}{|\cdot|^{s}}\|_{L^{\frac{n}{s},w}}\|f\|_{L^{q,r}(\R^{n})}\\
&\leq~ C~ \|f\|_{L^{q,r}(\R^{n})}\\
&\leq~ C~ \|f\|_{\dot{B}_{p,r}^{s}(\R^{n})}\\
&\leq ~C~ \|f\|_{\dot{B}_{2,r}^{s+n(\frac{1}{2}-\frac{1}{p})}(\R^{n})}.
\end{align*}
\end{proof}

\begin{lem}[Trace Operator]\label{lem3.5} For all $f\in \dot{H}^{1}(\R^{3})$, we have
\begin{equation}
\underset{x_{3}\in \R}{esssup}~~ \|f(\cdot,x_{3})\|_{\dot{H}^{\frac{1}{2}}(\R^{2})}\leq ~C~ \|f(\cdot)\|_{\dot{H}^{1}(\R^{3})}.
\end{equation}
\end{lem}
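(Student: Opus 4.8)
The plan is to prove this trace estimate by a direct computation on the Fourier side, exploiting the product structure $\R^3=\R^2_{x_h}\times\R_{x_3}$. Write $\xi=(\xi_h,\xi_3)\in\R^2\times\R$ for the dual frequency variables and let $\hat f(\xi_h,\xi_3)$ denote the full three-dimensional Fourier transform of $f$. First I would observe that, for a fixed slice $x_3$, the two-dimensional (horizontal) Fourier transform of $f(\cdot,x_3)$ is recovered by inverting only in the vertical frequency,
\[
\widehat{f(\cdot,x_3)}(\xi_h)=c\int_\R \hat f(\xi_h,\xi_3)\,e^{ix_3\xi_3}\,d\xi_3,
\]
so that $|\widehat{f(\cdot,x_3)}(\xi_h)|\le c\int_\R|\hat f(\xi_h,\xi_3)|\,d\xi_3$ with a bound uniform in $x_3$. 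This reduces the slice-wise $\dot H^{1/2}(\R^2)$ norm to a weighted $L^1_{\xi_3}$ quantity, which is where the vertical integrability of $\nabla f$ will be spent.

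Next I would insert the natural weight $(|\xi_h|^2+\xi_3^2)^{1/2}$ and apply the Cauchy--Schwarz inequality in $\xi_3$:
\[
\Bigl(\int_\R|\hat f(\xi_h,\xi_3)|\,d\xi_3\Bigr)^2\le\Bigl(\int_\R\frac{d\xi_3}{|\xi_h|^2+\xi_3^2}\Bigr)\Bigl(\int_\R(|\xi_h|^2+\xi_3^2)\,|\hat f(\xi_h,\xi_3)|^2\,d\xi_3\Bigr).
\]
The decisive gain comes from the elementary identity $\int_\R(|\xi_h|^2+\xi_3^2)^{-1}\,d\xi_3=\pi/|\xi_h|$, which produces exactly the factor $|\xi_h|^{-1}$ needed to compensate the weight $|\xi_h|$ appearing in the homogeneous $\dot H^{1/2}(\R^2)$ norm. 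Multiplying by $|\xi_h|$ and integrating over $\xi_h\in\R^2$ then gives
\[
\|f(\cdot,x_3)\|_{\dot H^{1/2}(\R^2)}^2=\int_{\R^2}|\xi_h|\,|\widehat{f(\cdot,x_3)}(\xi_h)|^2\,d\xi_h\le C\int_{\R^2}\!\int_\R(|\xi_h|^2+\xi_3^2)\,|\hat f(\xi_h,\xi_3)|^2\,d\xi_3\,d\xi_h=C\|f\|_{\dot H^1(\R^3)}^2.
\]
Since the right-hand side does not depend on $x_3$, taking the essential supremum over $x_3\in\R$ yields the asserted bound.

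The proof is essentially a classical restriction/trace argument, so no step is genuinely hard; the one point deserving care is the \emph{exact} matching of homogeneities. The power $|\xi_h|^{-1}$ generated by the vertical integration must precisely cancel the $|\xi_h|^{+1}$ weight of $\dot H^{1/2}$ in two dimensions, so that no residual power of $|\xi_h|$ survives and the estimate is scale-invariant. This cancellation is exactly what singles out $\dot H^{1/2}(\R^2)$ as the correct trace space for $\dot H^1(\R^3)$ on a hyperplane, reflecting the standard loss of half a derivative under restriction; any mismatch in these exponents would either leave a divergent frequency integral or break the claimed uniformity in $x_3$. An equivalent route, should one prefer to avoid the explicit kernel, would be to run the same estimate block-by-block on a Littlewood--Paley decomposition in $\xi_h$, but the direct computation above is the most transparent.
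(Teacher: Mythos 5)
Your proof is correct and follows essentially the same route as the paper: both compute the horizontal Fourier transform of the slice, apply Cauchy--Schwarz in $\xi_3$ with the weight $(|\xi_h|^2+\xi_3^2)^{1/2}$, and use the key identity $\int_\R(|\xi_h|^2+\xi_3^2)^{-1}\,d\xi_3=\pi/|\xi_h|$ to produce the factor $|\xi_h|^{-1}$ matching the $\dot H^{1/2}(\R^2)$ weight. The only cosmetic difference is that the paper reduces to the slice $x_3=0$ by translation invariance and density of Schwartz functions, whereas you keep the phase $e^{ix_3\xi_3}$ and bound it by $1$, giving uniformity in $x_3$ directly.
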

\begin{proof}
Since the translation invariance and dense embedding, it is sufficiently to show that
$$
\|f(\cdot,0)\|_{\dot{H}^{\frac{1}{2}}(\R^{2})}\leq ~C~ \|f(\cdot)\|_{\dot{H}^{1}(\R^{3})}, ~\text{for} ~~\forall f\in S(\R^{3}).
$$
Set $x'=(x_{1},x_{2}),~\xi'=(\xi_{1},\xi_{2})$ and $\gamma(f)(x_{1},x_{2})=f(x_{1},x_{2},0)=f(x',0)$,
\begin{align*}
\gamma(f)(x_{1},x_{2})&=~(2\pi)^{-3}\int\int\int e^{ix'\cdot\xi'}\hat{f}(\xi)~d\xi,\\
\widehat{\gamma(f)}(\xi_{1},\xi_{2})&=~(2\pi)^{-1}\int\hat{f}(\xi)~d\xi_{3}\\
&\leq~ C \int \hat{f}|\xi||\xi|^{-1}~d\xi_{3}\\
&\leq~ C \left(\int | \hat{f}|^{2}|\xi|^{2}~d\xi_{3}\right)^{\frac{1}{2}}\left(\int|\xi|^{-2}~d\xi_{3}\right)^{\frac{1}{2}}\\
&\leq~ C \left(\int | \hat{f}|^{2}|\xi|^{2}~d\xi_{3}\right)^{\frac{1}{2}} |\xi'|^{-\frac{1}{2}},
\end{align*}
Thus
$$
|\xi'||\widehat{\gamma{f}}|^{2}\leq C \int |\hat{f}|^{2} |\xi|^{2}~d\xi_{3}.
$$
Finally, integrating both side with $\int\int ~d\xi'$, we will derive the result.
\end{proof}
We present an essential \textit{a priori} estimate below for Theorem \ref{thm2.1}.
\begin{lem}\label{lem3.6}
Assume $\frac{2}{p}+\frac{1}{q_{v}}+\frac{2}{q_{h}}= 1-d,~~-1\leq d<1,\frac{2}{1-d}<q_{h}\leq\infty,~\frac{1}{q_{v}}+\frac{2}{q_{h}}< 1-d$. For a sufficiently small constant $\epsilon>0$, we have
\begin{equation}
\int_{\R^{3}} \frac{|u^{\theta}|}{r}|f|^{2}~dx\leq~ C_{\epsilon}\|r^{d}u^{\theta}\|_{L^{q_{v}}_{v}L^{q_{h},w}_{h}}^{p}\|f\|_{L^{2}(\R^{3})}^{2}+\epsilon\|\nabla f\|_{L^{2}(\R^{3})}^{2}.
\end{equation}
\end{lem}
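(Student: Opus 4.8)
The plan is to exploit the anisotropy by localizing the power weight in the horizontal plane first and integrating in $x_{3}$ only afterwards. Writing $x'=(x_{1},x_{2})$ and $r=|x'|$, I factor the integrand as
$$
\frac{|u^{\theta}|}{r}|f|^{2}=\bigl(r^{d}|u^{\theta}|\bigr)\cdot\frac{|f|^{2}}{r^{1+d}},
$$
so the first factor carries exactly the weight in the statement and the second is a pure horizontal power weight. For each fixed $x_{3}$ I apply the Lorentz H\"older inequality of Lemma \ref{Lorentz}(ii) on $\R^{2}$, pairing $r^{d}u^{\theta}\in L^{q_{h},w}_{h}=L^{q_{h},\infty}_{h}$ against $\frac{|f|^{2}}{r^{1+d}}\in L^{q_{h}',1}_{h}$ with $\frac{1}{q_{h}}+\frac{1}{q_{h}'}=1$. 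Using Lemma \ref{Lorentz}(i) to write $\|\,|f|^{2}r^{-(1+d)}\|_{L^{q_{h}',1}_{h}}=\|\,f\,r^{-(1+d)/2}\|_{L^{2q_{h}',2}_{h}}^{2}$ and then the horizontal Sobolev--Hardy inequality of Lemma \ref{lem3.4} (with $n=2$, $p=2q_{h}'$, $s=\tfrac{1+d}{2}$, $r=2$), the horizontal integral is bounded by $\|r^{d}u^{\theta}(\cdot,x_{3})\|_{L^{q_{h},w}_{h}}\,\|f(\cdot,x_{3})\|_{\dot H^{s_{0}}_{h}}^{2}$ with $s_{0}=\tfrac{1+d}{2}+\tfrac{1}{q_{h}}$. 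The hypothesis $s<n/p$ of Lemma \ref{lem3.4} reads $\tfrac{1+d}{2}<1-\tfrac{1}{q_{h}}$, i.e. $q_{h}>\tfrac{2}{1-d}$, which is precisely the stated lower bound on $q_{h}$ and also gives $s_{0}<1$; the endpoint $q_{h}=\infty$ is parallel, the Lorentz H\"older degenerating to the $L^{\infty}\times L^{1}$ pairing and Lemma \ref{lem3.4} applying with $p=2$.

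Integrating in $x_{3}$ and applying H\"older in the vertical variable with exponents $q_{v},q_{v}'$ then yields
$$
\int_{\R^{3}}\frac{|u^{\theta}|}{r}|f|^{2}\,dx\le C\,\|r^{d}u^{\theta}\|_{L^{q_{v}}_{v}L^{q_{h},w}_{h}}\,\|f\|_{L^{2q_{v}'}_{v}\dot H^{s_{0}}_{h}}^{2}.
$$

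The heart of the argument is to collapse the anisotropic mixed norm $\|f\|_{L^{2q_{v}'}_{v}\dot H^{s_{0}}_{h}}$ into a single isotropic homogeneous Sobolev norm on $\R^{3}$. I would do this by Minkowski's inequality (legitimate since $2q_{v}'\ge2$) to exchange the horizontal $L^{2}$ and vertical $L^{2q_{v}'}$ norms, followed fibrewise by the one-dimensional Sobolev embedding $\dot H^{1/(2q_{v})}(\R_{x_{3}})\hookrightarrow L^{2q_{v}'}(\R_{x_{3}})$, and finally the elementary frequency bound $|\xi_{3}|^{1/(2q_{v})}|\xi'|^{s_{0}}\le|\xi|^{s_{0}+1/(2q_{v})}$; this gives
$$
\|f\|_{L^{2q_{v}'}_{v}\dot H^{s_{0}}_{h}}\le C\,\|f\|_{\dot H^{\lambda}(\R^{3})},\qquad \lambda=s_{0}+\frac{1}{2q_{v}}=\frac{1+d}{2}+\frac{1}{q_{h}}+\frac{1}{2q_{v}}.
$$
The scaling identity $\tfrac{2}{p}+\tfrac{1}{q_{v}}+\tfrac{2}{q_{h}}=1-d$ collapses $\lambda$ to exactly $\lambda=1-\tfrac{1}{p}$, and here the strict hypothesis $\tfrac{1}{q_{v}}+\tfrac{2}{q_{h}}<1-d$ is decisive: it forces $p<\infty$, hence $\lambda<1$. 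Interpolating $\|f\|_{\dot H^{\lambda}}\le C\|f\|_{L^{2}}^{1-\lambda}\|\nabla f\|_{L^{2}}^{\lambda}$ and inserting this above produces $C\|r^{d}u^{\theta}\|_{L^{q_{v}}_{v}L^{q_{h},w}_{h}}\|f\|_{L^{2}}^{2/p}\|\nabla f\|_{L^{2}}^{2-2/p}$. I note that the vertical reduction must be split: for $1<q_{v}<\infty$ one has $\tfrac{1}{2q_{v}}\in(0,\tfrac12)$ and the $1$D Sobolev step is a genuine interior embedding, while the endpoint $q_{v}=1$ (so $2q_{v}'=\infty$) is the one forcing $L^{\infty}_{v}$ and is handled instead by the trace inequality of Lemma \ref{lem3.5} (in its fractional form $\operatorname*{esssup}_{x_{3}}\|f(\cdot,x_{3})\|_{\dot H^{s_{0}}_{h}}\le C\|f\|_{\dot H^{s_{0}+1/2}(\R^{3})}$), and $q_{v}=\infty$ is trivial by Plancherel.

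The last step is Young's inequality applied to the product $C\|r^{d}u^{\theta}\|_{L^{q_{v}}_{v}L^{q_{h},w}_{h}}\|f\|_{L^{2}}^{2/p}\cdot\|\nabla f\|_{L^{2}}^{2-2/p}$ with conjugate exponents $p'$ and $p$: since $(2-\tfrac{2}{p})p'=2$, the gradient factor is raised exactly to the power $2$ and is absorbed with a small coefficient $\epsilon$, while the remaining factor is raised to the power $p$, giving $\epsilon\|\nabla f\|_{L^{2}}^{2}+C_{\epsilon}\|r^{d}u^{\theta}\|_{L^{q_{v}}_{v}L^{q_{h},w}_{h}}^{p}\|f\|_{L^{2}}^{2}$, which is the assertion. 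I expect the main obstacle to be the anisotropic embedding reducing $\|f\|_{L^{2q_{v}'}_{v}\dot H^{s_{0}}_{h}}$ to $\|f\|_{\dot H^{\lambda}}$ together with the attendant exponent bookkeeping, since it is exactly there that the borderline structure of the hypotheses is consumed: the strict inequality guarantees $\lambda<1$ so that Young absorbs the gradient, and $q_{h}>\tfrac{2}{1-d}$ guarantees the horizontal Hardy step, while the vertical endpoint forces the separate use of the trace lemma.
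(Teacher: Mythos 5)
Your argument takes a genuinely different route from the paper's and is sound on most of the admissible parameter range: the horizontal Lorentz--H\"older/Sobolev--Hardy step (whose condition $s<n/p$ is exactly $q_h>\tfrac{2}{1-d}$), the bookkeeping identity $\lambda=s_0+\tfrac{1}{2q_v}=1-\tfrac1p$, and the final Young absorption all check out. Your collapse of $\|f\|_{L^{2q_v'}_v\dot H^{s_0}_h}$ into the isotropic norm $\|f\|_{\dot H^{\lambda}(\R^3)}$ (Minkowski, fibrewise one-dimensional Sobolev embedding, then $|\xi_3|^{1/(2q_v)}|\xi'|^{s_0}\le|\xi|^{\lambda}$) replaces the paper's core mechanism, which instead interpolates the horizontal norm between $L^2_h$, $\dot H^{1/2}_h$, $\dot H^1_h$ with three exponents $a,b,c$, applies H\"older in $x_3$, and invokes the trace Lemma \ref{lem3.5} only at the fixed exponent $\tfrac12$. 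A further structural difference: for $-1\le d<0$ the paper multiplies and divides by $(ru^\theta)^{1-\tau}$, $\tau=\tfrac{1}{1-d}$, and bounds that factor via Lemma \ref{lem3.1}, so its constant depends on $\|ru_0^\theta\|_{L^\infty}$; your factorization $r^du^\theta\cdot\tfrac{|f|^2}{r^{1+d}}$ avoids Lemma \ref{lem3.1} altogether and would yield a pure functional inequality, valid for arbitrary $u^\theta$.

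However, there is a genuine gap, and it sits exactly where the paper's $\tau$-trick earns its keep. The corner $d=-1$, $q_v=1$, $q_h=\infty$ (hence $p=2$) is admissible: $\tfrac1{q_v}+\tfrac2{q_h}=1<2=1-d$ and $q_h>\tfrac2{1-d}=1$. There $s_0=\tfrac{1+d}{2}+\tfrac{1}{q_h}=0$, and your treatment of the endpoint $q_v=1$ invokes the fractional trace $\operatorname*{esssup}_{x_3}\|f(\cdot,x_3)\|_{\dot H^{s_0}_h}\le C\|f\|_{\dot H^{s_0+1/2}(\R^3)}$, which at $s_0=0$ asserts $\dot H^{1/2}(\R^3)\hookrightarrow L^\infty_vL^2_h$. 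That is false: the trace theorem fails at the $H^{1/2}$ endpoint, and concretely the proof of Lemma \ref{lem3.5} requires $\int_{\R}|\xi|^{-2s_0-1}\,d\xi_3\sim|\xi'|^{-2s_0}$, which diverges when $s_0=0$; no constant rescues the embedding. The paper never meets this degeneracy because the rescaling by $\tau$ pins the vertical sup on the fixed space $\dot H^{1/2}_h$ for every $d$. The repair at this corner is elementary: since $q_h=\infty$ no Hardy weight is needed, and $\operatorname*{esssup}_{x_3}\|f(\cdot,x_3)\|^2_{L^2_h}\le2\|f\|_{L^2}\|\partial_3f\|_{L^2}$ gives $\int_{\R^3}\tfrac{|u^\theta|}{r}|f|^2\,dx\le2\|r^{-1}u^\theta\|_{L^1_vL^\infty_h}\|f\|_{L^2}\|\nabla f\|_{L^2}$, after which Young closes the case $p=2$. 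So your proof is correct away from this corner and repairable at it, but as written the trace step fails for admissible exponents.
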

\begin{proof}
Set
$$
a=\frac{2\tau}{p},~b=\frac{2\tau}{q_{v}},~c=2-a-b,~\frac{2}{\gamma}=1-\frac{\tau}{q_{h}}, ~~~\tau=\left\{\begin{split}&1,&0\leq d<1\\&\frac{1}{1-d},&-1\leq d <0 \end{split} \right..
$$
Thus
$$
0\leq a,b,c\leq 2,~a\neq0,~2\leq \gamma<4.
$$
It is appeared to see that $\gamma>2$ if $q_{h}<\infty$, and $\gamma=2$ if $q_{h}=\infty$.
By applying Lemma \ref{Lorentz}, Lemma \ref{lem3.1},  Lemma \ref{lem3.4}, Lemma \ref{lem3.5} and interpolation, we can deduce the following estimates, respectively.
\begin{align*}
\int_{\R^{3}} \frac{|u^{\theta}|}{r}|f|^{2}~dx&=\int_{\R^{3}}|(ru^{\theta})^{1-\tau}(r^{d}u^{\theta})^{\tau}| \frac{f^{2}}{r^{2+(d-1)\tau}}~dx\\
&\leq~C~ \int_{\R}\|r^{d}u^{\theta}\|^{\tau}_{L^{q_{h},w}_{h}}\||\frac{f}{r^{1+\frac{(d-1)\tau}{2}}}|^{2}\|_{L^{\frac{\gamma}{2},1}_{h}}~dx_{3}\\
&\leq~C~ \int_{\R}\|r^{d}u^{\theta}\|^{\tau}_{L^{q_{h},w}_{h}}\|\frac{f}{r^{1+\frac{(d-1)\tau}{2}}}\|^{2}_{L_{h}^{\gamma,2}}~dx_{3}\\
&\leq~C~\int_{\R}\|r^{d}u^{\theta}\|^{\tau}_{L^{q_{h},w}_{h}}\|f(\cdot,x_{3})\|_{\dot{B}_{2,2}^{2+\frac{(d-1)\tau}{2}-\frac{2}{\gamma}}(\R^{2})}^{2}~dx_{3}\\
&\leq~C~ \int_{\R}\|r^{d}u^{\theta}\|^{\tau}_{L^{q_{h},w}_{h}} \|f(\cdot,x_{3})\|^{a}_{L_{h}^{2}} \|f(\cdot,x_{3})\|^{b}_{\dot{H}_{h}^{\frac{1}{2}}}\|f(\cdot,x_{3})\|^{c}_{\dot{H}_{h}^{1}}~dx_{3}\\
&\leq~C~ \|r^{d}u^{\theta}\|^{\tau}_{L_{v}^{q_{v}}L_{h}^{q_{h},w}}\|f\|^{a}_{L^{2}_{x}}\|f\|^{b}_{L_{v}^{\infty}\dot{H}_{h}^{\frac{1}{2}}} \|f\|^{c}_{L^{2}_{v}\dot{H}_{h}^{1}}\\
&\leq~C~ \|r^{d}u^{\theta}\|^{\tau}_{L_{v}^{q_{v}}L_{h}^{q_{h},w}}\|f\|^{a}_{L^{2}_{x}}\|\nabla f\|^{2-a}_{L^{2}_{x}}\\
&\leq~C_{\epsilon}\|r^{d}u^{\theta}\|_{L^{q_{v}}_{v}L^{q_{h},w}_{h}}^{p}\|f\|_{L^{2}_{x}}^{2}+\epsilon\|\nabla f\|_{L^{2}_{x}}^{2}.
\end{align*}
\end{proof}

\section{Proof of Theorem \ref{thm2.1} \label{section4}}
$\bullet$ For simplicity, we only prove the Theorem \ref{thm2.1} in the critical case, i.e.
\begin{equation}
r^{d}u^{\theta}\in L^{p}_{T}L^{q_{v}}_{v}L^{q_{h},w}_{h},~~
\frac{2}{p}+\frac{1}{q_{v}}+\frac{2}{q_{h}}=1-d, ~-1\leq d<1,\frac{2}{1-d}<q_{h}\leq\infty,~\frac{1}{q_{v}}+\frac{2}{q_{h}}< 1-d.
\end{equation}
Otherwise, we can find $p_{*}<p$, such that
\begin{equation}
\begin{split}
&\|r^{d}u^{\theta}\|_{L^{p_{*}}_{T}L^{q_{v}}_{v}L^{q_{h},w}_{h}}\leq T^{\frac{1}{p_{*}}-\frac{1}{p}}\|r^{d}u^{\theta}\|_{L^{p}_{T}L^{q_{v}}_{v}L^{q_{h},w}_{h}}<\infty, \\
&\frac{2}{p_{*}}+\frac{1}{q_{v}}+\frac{2}{q_{h}}=1-d, ~-1\leq d<1,\frac{2}{1-d}<q_{h}\leq\infty,~\frac{1}{q_{v}}+\frac{2}{q_{h}}< 1-d.\label{4.2}
\end{split}
\end{equation}
Therefore, we can calculate analogously below with the condition (\ref{4.2}).

$\bullet$ As in \cite{Chen2017b}, we introduce the ingredient $(\Phi,\Gamma)=(\frac{\omega^{r}}{r},\frac{\omega^{\theta}}{r})$, which satisfy the following equations
 \begin{equation}\label{1.4}
\left\{
\begin{split}
\partial_{t}\Phi+(\mathbf{b}\cdot\nabla)\Phi-(\Delta+\frac{2}{r}\partial_{r})\Phi-(\omega^{r}\partial_{r}+\omega^{3}\partial_{3})\frac{u^{r}}{r}=0,\\
\partial_{t}\Gamma+(\mathbf{b}\cdot\nabla)\Gamma-(\Delta+\frac{2}{r}\partial_{r})\Gamma+2\frac{u^{\theta}}{r}\Phi=0.
\end{split}
\right.
\end{equation}
We show that
\begin{align}\label{4.4}
\frac{1}{2}\frac{d}{dt}\|\Phi\|_{L^{2}_{x}}^{2}+\|\nabla\Phi\|_{L^{2}_{x}}^{2}&=\int_{\R^{3}}u^{\theta}(\partial_{r}\frac{u^{r}}{r}\partial_{3}\Phi-\partial_{3} \frac{u^{r}}{r}\partial_{r}\Phi)~dx\nonumber\\
&\leq~\frac{1}{2}\int_{\R^{3}}|u^{\theta}|^{2}|\nabla \frac{u^{r}}{r}|^{2}~dx+\frac{1}{2}\|\nabla\Phi\|_{L^{2}_{x}}^{2}\nonumber\\
&\leq~C\int_{\R^{3}} \frac{|u^{\theta}|}{r} |\nabla \frac{u^{r}}{r}|^{2}~dx+\frac{1}{2}\|\nabla\Phi\|_{L^{2}_{x}}^{2}.
\end{align}
\begin{align}\label{4.5}
\frac{1}{2}\frac{d}{dt}\|\Gamma\|_{L^{2}_{x}}^{2}+\|\nabla\Gamma\|_{L^{2}_{x}}^{2}&=-2\int_{\R^{3}}\frac{u^{\theta}}{r}\Gamma\Phi~dx \nonumber\\
&\leq~\int_{\R^{3}}\frac{|u^{\theta}|}{r}|\Gamma|^{2}~dx+\int_{\R^{3}}\frac{|u^{\theta}|}{r}|\Phi|^{2}~dx.
\end{align}
Applying Lemma \ref{lem3.2} and Lemma \ref{lem3.6} in (\ref{4.4}), we have
\begin{align}
\frac{d}{dt}\|\Phi\|_{L^{2}_{x}}^{2}+\|\nabla\Phi\|_{L^{2}_{x}}^{2}&\leq~
C\int_{\R^{3}} \frac{|u^{\theta}|}{r}~|\nabla \frac{u^{r}}{r}|^{2}~dx \nonumber\\
&\leq~ C\|r^{d}u^{\theta}\|_{L^{q_{v}}_{v}L^{q_{h},w}_{h}}^{p}\|\nabla \frac{u^{r}}{r}\|_{L^{2}_{x}}^{2}+\frac{1}{4}\|\nabla^{2} \frac{u^{r}}{r}\|_{L^{2}_{x}}^{2}\nonumber\\
&\leq~C\|r^{d}u^{\theta}\|_{L^{q_{v}}_{v}L^{q_{h},w}_{h}}^{p}\|\Gamma\|_{L^{2}_{x}}^{2}+\frac{1}{4}\|\nabla\Gamma\|_{L^{2}_{x}}^{2}.\label{4.6}
\end{align}
Analogously, applying Lemma \ref{lem3.6} in (\ref{4.5}), it is easy to obtain that,
\begin{equation}\label{4.7}
\frac{1}{2}\frac{d}{dt}\|\Gamma\|_{L^{2}_{x}}^{2}+\|\nabla\Gamma\|_{L^{2}_{x}}^{2}
\leq C~\|r^{d}u^{\theta}\|_{L^{q_{v}}_{v}L^{q_{h},w}_{h}}^{p}\left(\|\Phi\|_{L^{2}_{x}}^{2}+\|\Gamma\|_{L^{2}_{x}}^{2}\right)+\frac{1}{4}\|\nabla\Phi\|_{L^{2}_{x}}^{2}+\frac{1}{4}\|\nabla\Gamma\|_{L^{2}_{x}}^{2}.
\end{equation}
Summing up (\ref{4.6}) and  (\ref{4.7}), we have
\begin{align*}
\frac{d}{dt}(\|\Phi\|_{L^{2}_{x}}^{2}+\|\Gamma\|_{L^{2}_{x}}^{2})+\|\nabla\Phi\|_{L^{2}_{x}}^{2}+\|\nabla\Gamma\|_{L^{2}_{x}}^{2}\leq~ C~ \|r^{d}u^{\theta}\|_{L^{q_{v}}_{v}L^{q_{h},w}_{h}}^{p}(\|\Phi\|_{L^{2}_{x}}^{2}+\|\Gamma\|_{L^{2}_{x}}^{2}).
\end{align*}
Using Gronwall's inequality, we have
\begin{equation}\label{3.9}
\sup_{t\in[0,T^*)} \|\Gamma\|_{L^{\infty}([0,t);L^{2}_{x})}^2\leq~ (\|\Phi_0\|_{L^{2}_{x}}^{2}+\|\Gamma_0\|_{L^{2}_{x}}^{2})\exp(C~\|u^{\theta}\|^{p}_{L^{p}_{T}L^{q_{v}}_{v}L^{q_{h},w}_{h}})<\infty.
\end{equation}
Applying Lemma \ref{lem3.3}, we obtain that $\mathbf{u}$ can be continued beyond $T$.

$\hfill\Box$

\section*{Acknowledgments}
 This work is
  partially supported by  NSF of
China under Grants 11671353,  11331005 and 11771389,
 Zhejiang Provincial Natural Science Foundation of China LR17A010001.





\end{document}